\newtheorem{theorem}{Theorem}
\newtheorem{corollary}{Corollary}
\newtheorem{lemma}{Lemma}
\newtheorem{proposition}{Proposition}
\newtheorem{definition}{Definition}
\newtheorem{remark}{Remark}
\numberwithin{equation}{section}
\let\@wraptoccontribs\wraptoccontribs
\begin{document} 

\title{A new two-variable generalization of the Jones polynomial}

\author{D. Goundaroulis}
\address{Center for Integrative Genomics,
UNIL,
Batiment G\'enopode, CH-1015 Lausanne, Switzerland.}
\address{Swiss Institute of Bioinformatics, CH-1015 Lausanne, Switzerland.}
\email{dimoklis.gkountaroulis@unil.ch}
%\urladdr{users.ntua.gr/dgound}

\author{S. Lambropoulou}
\address{Department of Mathematics,
National Technical University of Athens,
Zografou campus, GR--157 80 Athens, Greece.}
\email{sofia@math.ntua.gr}
\urladdr{www.math.ntua.gr/~sofia/}

%\thanks{This research has been co-financed by the European Union (European Social
% Fund - ESF) and Greek national funds through the Operational Program
% "Education and Lifelong Learning" of the National Strategic Reference
% Framework (NSRF) - Research Funding Program: THALES: Reinforcement of the
% interdisciplinary and/or inter-institutional research and innovation, MIS: 380154.}

\keywords{Framization, Yokonuma--Hecke algebra, Temperley--Lieb algebra, algebra of braids and ties, partition Temperley--Lieb algebra, Markov trace, link invariants}

\subjclass[2010]{57M25, 57M27, 20C08, 20F36}

\date{}

\begin{abstract}      
We present a new 2-variable generalization of the Jones polynomial that can be defined through the skein relation of the Jones polynomial. The well-definedness of this  invariant is proved both algebraically and diagrammatically as well as via a closed combinatorial formula. This new invariant is able to distinguish more pairs of non-isotopic links than the original Jones polynomial, such as the Thistlethwaite link from the unlink with two components. 
\end {abstract}

\maketitle

\section{Introduction}

In the last ten years there has been a new spark of interest for polynomial invariants for framed and classical knots and links. One of the concepts that appeared was that of the framization of a knot algebra, which was first proposed by J. Juyumaya and the second author  in \cite{22, 23}. In their original work, they constructed new 2-variable polynomial invariants for framed and classical links  via the Yokonuma--Hecke algebras ${\rm Y}_{d,n}(u)$ \cite{34}, which are quotients of the framed braid group $\mathcal{F}_n$. The algebras ${\rm Y}_{d,n}(u)$ can be considered as framizations of the Iwahori--Hecke algebra of type $A$, ${\rm H}_n(u)$, and for $d=1$, ${\rm Y}_{1,n}(u)$ coincides with ${\rm H}_n(u)$. They used the Juyumaya trace  \cite{18} with parameters $z, x_1, \ldots ,x_{d-1}$ on ${\rm Y}_{d,n}(u)$ and the so-called ${\rm E}$-condition imposed on the framing parameters $x_i$, $1 \leq i \leq d-1$.  These new invariants and especially those for classical links had to be compared to other known invariants like the Homflypt polynomial \cite{30,29,11,31}. The lack of an appropriate topological interpretation, however, prevented for quite a while such a comparison. 

 Eventually, in \cite{4} the implementation of a different presentation  for the algebra ${\rm Y}_{d,n}$, with parameter $q$ instead of $u$, revealed that the corresponding invariants for classical links, $\Theta_d (q, \lambda_D)$, satisfy the  skein relation of the Homflypt polynomial $P$, but only on crossings between different components of the link, and this fact allowed the comparison of the invariants $\Theta_d$ to $P$ \cite{24, 6, 4}. We note that  for $d=1$ the invariant  $\Theta_1$ coincides with $P$. As it turned out, for $d\neq 1$, the invariants $\Theta_d$ are {\it not } topologically equivalent to the Homflypt polynomial {\it on links}, meaning that they distinguish pairs of non-isotopic oriented classical links, which are $P$-equivalent. Moreover, the invariants $\Theta_d$ are not topologically equivalent to the Kauffman polynomial \cite{25}, since on {\it knots} they are topologically equivalent to the Homflypt polynomial. 

Further, the invariants $\Theta_d$ have been generalized in \cite{4} to a new 3-variable invariant $\Theta (q, \lambda, E)$  which is stronger than the Homflypt polynomial. The indeterminate $E$ can be viewed as a shortcut around  the ${\rm E}$-system and the determination of its solutions.  The well-definedness of $\Theta$ was proved in \cite{4} by showing that  on classical links it coincides with the invariant for tied links that is derived from the algebra of braids and ties \cite{1, 3}. 
An alternative, purely skein-theoretical approach, was also given in \cite{4}. A self-contained proof of the well-definedness of $\Theta$ via this approach has been recently given by L. Kauffman and the second author \cite{27}, while in \cite{28} they also construct new state sum models related to $\Theta$. Furthermore, a closed combinatorial formula for the invariant $\Theta$  has been proved by W.B.R. Lickorish in \cite[Appendix~B]{4}, which involves  the values of the Homflypt polynomial on sublinks of a given oriented link.

In \cite{14} the framization of the Temperley--Lieb algebra, ${\rm FTL}_{d,n}(q)$, was introduced as a quotient of the Yokonuma--Hecke algebra. From the algebras ${\rm FTL}_{d,n}(q)$ one-variable invariants for oriented classical links, $\theta_d (q)$, were derived by finding the necessary and sufficient conditions for the Juyumaya trace to pass through to the quotient algebra. The invariants $\theta_d$, being specializations of the invariants $\Theta_d$, they carry all of their properties. Note that, for $d=1$, $\theta_1$ coincides with the Jones polynomial  \cite{{16}}. Further, as it was proved  in \cite{14}, for $d\neq1$, $\theta_d$ is {\it not } topologically equivalent to the Jones polynomial {\it on links}.

\smallbreak 

In this paper we introduce a 2-variable isotopy invariant for oriented classical knots and links, denoted by $\theta$, which generalizes the invariants $\theta_d$ and is a specialization of $\Theta$. More precisely:
\begin{theorem} \label{mainthm}
Let $q, E$ be indeterminates. There exists a unique ambient isotopy link invariant defined on the set of classical oriented link diagrams
\[
\theta : \mathcal{L} \rightarrow \mathbb{C}[q^{\pm 1} , E^{\pm 1}]
\]
 by the following rules:

\begin{enumerate}
\item On crossings between different components the following skein relation holds:
\[
q^{-2}\, \theta (  \raisebox{-.1cm}{\begin{tikzpicture}[scale=.2]
\draw [line width=0.35mm, draw=red]  (-1,-1)-- (-0.22,-0.22);
\draw  [line width=0.35mm, draw=black](-1,1)--(0,0);
\draw  [line width=0.35mm, draw=red] (0.22,0.22) -- (1,1)[->];
\draw [line width=0.35mm, draw=black]   (0,0) -- +(1,-1)[->];
\end{tikzpicture}}\, ) - q^2\, \theta (\raisebox{-.1cm}{\begin{tikzpicture}[scale=.2]
\draw  [line width=0.35mm, draw=red] (-1,-1)-- (0,0) ;
\draw [line width=0.35mm, draw=black] (-1,1)--(-0.22,0.22);
\draw [line width=0.35mm, draw=red] (0,0) -- (1,1)[->];
\draw [line width=0.35mm, draw=black]   (0.22,-0.22) -- +(.8,-.8)[->]; \end{tikzpicture}}\,) = (q - q^{-1})\, \theta (\raisebox{-.1cm}{\begin{tikzpicture}[scale=.2, mydeco/.style = {decoration = {markings, 
                                                       mark = at position #1 with {\arrow{>}}}
                                       }]
\draw [red, line width=0.35mm, postaction = {mydeco=.6 ,decorate}] plot [smooth, tension=2] coordinates { (-1,.8) (0, 0.5) (1,.8)};
\draw [red, line width=0.35mm, postaction = {mydeco=.6 ,decorate}] plot [smooth, tension=2] coordinates { (-1,-.8) (0, -0.5) (1,-.8)};
\end{tikzpicture}}\,).
\]
\item For a  union $\mathcal{K} = \sqcup_{i=1}^r K_i$ of $r$ unlinked knots,  $r \geq 1$, it holds that:
\[
\theta (\mathcal{K}) = E^{1-r}V(\mathcal{K})
\]
where $V(\mathcal{K})$ is the value of the Jones polynomial on $\mathcal{K}$.
\end{enumerate}
\end{theorem}

We establish the invariant $\theta(q,E)$ both algebraically and skein-theoretically,  resting on the analogous proofs for the invariant $\Theta (q, \lambda, E)$ \cite{4,27}. Algebraically, we first construct a new 2-variable invariant for tied links,  ${\overline \theta}(q,E)$, using the Markov trace $\rho$ on the algebra of braids and ties $\mathcal{E}_n(q)$ \cite{1} and specifying the necessary and sufficient conditions so that $\rho$ passes to the quotient algebra ${\rm PTL}_n(q)$, the partition Temperley--Lieb algebra \cite{19}. By restricting ${\overline \theta}(q,E)$ to classical links we obtain the invariant $\theta(q,E)$. A second algebraic way to obtain the invariant $\theta$ is by considering an isomorphism between the algebra ${\rm PTL}_{n}(q)$ and the  subalgebra of the algebra ${\rm FTL}_{d,n}(q)$ generated only by the braiding generators \cite{15}.   Skein-theoretically,  $\theta$ can be described as a specialization of $\Theta$ via the skein relation of the Jones polynomial for oriented links and a set of initial conditions. We continue by giving a closed combinatorial formula for  $\theta$ derived from the formula of W.B.R. Lickorish for  $\Theta$ \cite[Appendix~B]{4}. Namely, for an $n$-component oriented link $L$ we have:
\begin{equation}\label{thetaformula}
\theta(q,E)(L) = \sum_{k=1}^n (-1)^{k-1} (q+q^{-1})^{k-1} E_k \sum_{\pi} q^{4\nu(\pi)} V(\pi L), 
\end{equation}
where the second summation is over all partitions  $\pi$ of the components of $L$ into $k$ (unordered) subsets and $V(\pi L)$ denotes the product of the Jones polynomials of the $k$ sublinks of $L$ defined by $\pi$.  Furthermore, $\nu(\pi)$ is the sum of all linking numbers of pairs of components of $L$ that are distinct sets of $\pi$ and $E_k := (E^{-1} -1 ) (E^{-1} -2 ) \ldots (E^{-1} - k+1)$, with $E_1=1$. 
 Finally, we relate  $\theta$ to the oriented extension of the Kauffman bracket polynomial and we show that it distinguishes the Thistlethwaite link from the unlink. This last result is due to the first author and Louis H. Kauffman. Although the invariant $\theta$ comes as a specialization of  $\Theta$ \cite{4}, yet it is `new' in the sense that it does not coincide with any other known generalization of the Jones polynomial. To recapitulate, the invariant $\theta(q,E)$ is topologically equivalent to the Jones polynomial on knots and it is stronger than the Jones polynomial on links. Consequently, it is not topologically equivalent to the Homflypt or to the Kauffman polynomial.

 For a better understanding of the relations between the various algebras as well as their associated invariants, we refer the reader to Figure~\ref{diag}. The outer arrows indicate the algebras involved and the algebra projection maps, while the dotted arrows point from the algebras to their associated invariants. The inner part of the diagram shows the inclusions and the specializations of the invariants.

\begin{figure}[!tbh] 
{ \raggedleft
\[
\centerline{ \xymatrix{ & & \txt{Hecke\ algebra}\ar@{.>}[d]  & & \\
&  & \txt{ Homflypt} \ar@{_{(}->}[d]<-.5ex>& &\\
&\txt{Algebra of\\ braids and ties}\ar@/^2.0pc/[uur] \ar@{->>}[d] \ar@{.>}[r] & \Theta(q,\lambda, E)  \ar@{->}[u]<-.5ex>_{E=1}\ar[d]^{\lambda= q^4}  \ar@{->}[r]<-.5ex>_{E=1/d} & \Theta_d(q,\lambda_d) \ar@{_{(}->}[l]<-.5ex>\ar[d]^{\lambda_d=q^4}  \ar@/_/[ul]^{d=1} & \txt{Yokonuma-Hecke\\algebra} \ar@/_2.0pc/[uull]^{d=1}\ar@{->>}[d] \ar@{.>}[l]  \\
&\txt{Partition\\Temperley-Lieb\\algebra} \ar@/_2.0pc/[ddr] \ar@{.>}[r] & \theta(q, E)  \ar@{->}[d]<-.5ex>_{E=1}  \ar@{->}[r] <-.5ex>_{E=1/d}& \theta_d(q) \ar@/^/[dl]^{d=1}  \ar@{_{(}->}[l] <-.5ex> & \txt{Framization of\\ Temperley-Lieb\\algebra} \ar@{.>}[l] \ar@/^2.0pc/[ddll]_{d=1}\\
&  & \txt{Jones polynomial} \ar@{_{(}->}[u]<-.5ex>& &\\
& & \txt{Temperley-Lieb\\algebra}\ar@{.>}[u] & &} }
\]
}
\caption{An overview of the relations between the algebras and their derived invariants.}\label{diag}

\end{figure}

\smallbreak
The outline of the paper is as follows:
In Section~\ref{notation} we set the basic notations.  Section~\ref{prelim} is dedicated to providing necessary definitions and results, including:  some basic facts about the Yokonuma--Hecke algebra, the Juyumaya trace ${\rm tr}_d$, and the solutions of the ${\rm E}$-system.  We then describe the construction of the invariants  $\Theta_{d}$ for classical links through the use of the specialized trace ${\rm tr}_{d,D}$. In Section~\ref{ftlsec} we recall the definition of the framization of the Temperley--Lieb algebra ${\rm FTL}_{d,n}(q)$ and we present briefly the results of \cite{14} regarding the 1-variable invariants $\theta_d$ for classical links derived from ${\rm FTL}_{d,n}(q)$. In Section~\ref{btsec} we present the algebra of braids and ties and we describe the different methods for generalizing the invariants $\Theta_d$ to the 3-variable invariant $\Theta(q,\lambda, E)$, following \cite{4}. In Section~\ref{ptlsec} we present the main result of this paper, the generalization of the invariants $\theta_d(q)$ to the  2-variable invariant $\theta(q, E)$ and we show that it is stronger than the Jones polynomial. We define $\theta$ algebraically as well as diagrammatically and we present the closed combinatorial formula for the invariant $\theta$.

\section{Notations} \label{notation}

We fix two positive integers, $d$ and $n$. Every algebra considered in this paper is an associative unital algebra over the field $\mathbb{C}(q)$, where $q$ is an indeterminate.   The {\it framed braid group} on $n$ strands is defined as the semi-direct product of the Artin braid group $B_n$ with $n$ copies of $\mathbb{Z}$, ${\mathcal F}_{n} = \mathbb{Z}^n \rtimes  B_n$, where the action of the braid group $B_n$ on $\mathbb{Z}^n$ is given by the permutation induced by a braid on the indices $\sigma_it_j=t_{s_i(j)}\sigma_i$. Topologically, $t_i$ means framing 1 on the $i$-th strand of a braid and so the generators $t_i$ of $\mathbb{Z}^n$ are called the {\it framing generators}.  The {\it modular framed braid group}, $\mathcal{F}_{d,n} = (\mathbb{Z}/d\mathbb{Z})^n \rtimes B_n$, is defined by taking the framings modulo $d$. Due to the defining action above, a word $w$ in ${\mathcal F}_{n}$ (resp. $\mathcal{F}_{d,n}$) has the {\it splitting property}, that is, it splits into the  {\it framing part} and the {\it braiding  part}:
$
w = t_1^{a_1}\ldots t_n^{a_n} \, \sigma,
$
where $\sigma \in B_n$ and $a_i \in \mathbb{Z}$ (resp. $\mathbb{Z}/d\mathbb{Z}$). So $w$ is a classical braid with an integer (resp. an integer modulo $d$) attached to each
strand. 

\section{The invariants $\Theta_d$ and the Yokonuma--Hecke algebra}\label{prelim}

The {\it Yokonuma--Hecke algebra} of type $A$ ${\rm Y}_{d,n}(q)$ \cite{34} is defined as the quotient of the group algebra
$\mathbb{C}(q) {\mathcal F}_{d,n}$ over the two-sided ideal  generated by the elements
$
\sigma_i^2 - 1 -  (q-q^{-1}) \, e_i \, \sigma_i$ for all  $i,$ 
where $e_{i} := \frac{1}{d} \sum_{s=0}^{d-1}t_i^s t_{i+1}^{d-s}$, for  $i=1,\ldots , n-1$. The elements $e_i$ in $\mathcal{F}_{d,n}$ are idempotents \cite{18,20} and can be generalized  to the elements $e_{i,j} := \frac{1}{d} \sum_{s=0}^{d-1}t_i^s t_{j}^{d-s}$, for any indices $i,j$. We also define {\it the shift of $e_i$ by $m$}, $e_i^{(m)} := \frac{1}{d} \sum_{s=0}^{d-1} t_i^{m+s} t_{i+1}^{d-s}$, for any $0\leq m \leq d-1$. The generators of the ideal give rise to the following quadratic relations in ${\rm Y}_{d,n}(q)$: 
\begin{equation}\label{yqeq}
g_i^2 = 1 + (q-q^{-1}) \, e_i \, g_i,
\end{equation}
where $g_i$ corresponds to $\sigma_i$ and $t_j$ to $t_j$. Moreover, \eqref{yqeq} implies that the elements $g_i$ are invertible with $g_i^{-1} = g_i - (q- q^{-1}) e_i$, $ 1 \leq i \leq n-1$. The $t_j$'s are the {\it framing generators} while the $g_i$'s are the {\it braiding generators} of ${\rm Y}_{d,n}(q)$. By its construction, the Yokonuma--Hecke algebra of type $A$ is considered as {\it the framization of the Iwahori-Hecke algebra of type} A.

\subsection{{\it Markov trace on ${\rm Y}_{d,n}(q)$}} In \cite{18} Juyumaya has proven that ${\rm Y}_{d,n}(q)$ supports a unique Markov trace function  ${\rm tr}_d:  \cup_{n=1}^{\infty}{\rm Y}_{d,n}(q) \longrightarrow    \mathbb{C}(q)[z, x_1, \ldots, x_{d-1}],$ where $z$, $x_1$, $\ldots, x_{d-1}$ are indeterminates, defined inductively on $n$ by the following rules:
\[
\begin{array}{rcll}
{\rm tr}_d(ab) & = & {\rm tr}_d(ba)  \qquad &  \\
{\rm tr}_d(1) & = & 1 & \\
{\rm tr}_d(ag_n) & = & z\, {\rm tr}_d(a) \qquad &  (\text{Markov  property} )\\
{\rm tr}_d(at_{n+1}^s) & = & x_s {\rm tr}_d(a)\qquad  & (  s = 1, \ldots , d-1) ,
\end{array}
\]
where  $a,b \in {\rm Y}_{d,n}(q)$. Using the rules of ${\rm tr}_d$ and setting  $x_0:=1$, one deduces that ${\rm tr}_d(e_i)$ takes the same value for all $i$, that is: ${\rm tr}_d(e_i)= \frac{1}{d}\sum_{s=0}^{d-1}x_{s}x_{d-s}:=E.$ Moreover, we also define $E^{(m)} :={\rm tr}_d(e_i^{(m)})= \frac{1}{d}\sum_{s=0}^{d-1}x_{m+s}x_{d-s}$ ($0\leq m \leq d-1$), {\it the shift by $m$ of $E$} . Notice that  $E^{(0)} = E$.

\subsection{{\it The ${\rm E}$-system}} In order to define framed and classical link invariants via the trace ${\rm tr}_d$, one should re-scale ${\rm tr}_d$ so that ${\rm tr}_d(ag_n) = {\rm tr}_d (a g_n^{-1})$, $a \, \in \, {\rm Y}_{d,n}(q)$. Unfortunately, the trace ${\rm tr}_d$ is the only known Markov trace that does not re-scale directly \cite{22}. The {\it ${\rm E}$-system} is the following system of non-linear equations
\begin{equation}\label{Esystem}
E^{(m)}  =   x_{m}E \qquad (1\leq m \leq d-1).
\end{equation}
 that was introduced in order to find the  necessary and sufficient conditions that needed to be applied on the parameters $x_i$  of  ${\rm tr}_d$ so that re-scaling would be possible \cite{22}. We say that the $(d-1)$-tuple  of complex numbers $({\rm x}_1, \ldots , {\rm x}_{d-1})$ satisfies the {\it ${\rm E}$-condition} if ${\rm x}_1,\ldots , {\rm x}_{d-1}$ are solutions of the ${\rm E}$-system. The full set of solutions of the ${\rm E}$-system is given by Paul G\'{e}rardin \cite[Appendix]{22} using tools of harmonic analysis on finite groups and they are parametrized by the non-empty subsets of $\mathbb{Z}/d\mathbb{Z}$, for details see \cite{22, 14}. More precisely, he interpreted the solution 
{$({\rm x}_1,\ldots,{\rm x}_{d-1})$} 
of the ${\rm E}$-system, as the complex function $x : \mathbb{Z}/d\mathbb{Z} \rightarrow \mathbb{C}$ that sends $k \mapsto {\rm x}_k$, $k\neq 0$ and $0 \mapsto 1$. Now let $\chi_m$ denote the character of the group $\mathbb{Z}/d\mathbb{Z}$ and denote by $\mathbf{i}_m:=\sum_{s=0}^{d-1} \chi_m( s) t^s$, for $ m\in {\mathbb Z}/d{\mathbb Z} \in \mathbb{C}[\mathbb{Z}/d\mathbb{Z}]$. We then have that the solutions of the ${\rm E}$-system are of the following form:
\[
{\rm x}_s = \frac{1}{|D|} \sum_{m \in D} \mathbf{i}_m (s), \quad 1 \leq s \leq d-1
\]
where  $D$ is a non-empty subset of $\mathbb{Z}/d\mathbb{Z}$. Hence, the solutions of the ${\rm E}$-system are parametrized by the non-empty subsets of $\mathbb{Z}/d\mathbb{Z}$. Two obvious solutions of the ${\rm E}$-system are  when all $x_i$'s take the value zero and when the $x_i$'s are specialized to the $d$-th roots of unity. For the rest of the paper we fix $X_D = \left ( {\rm x}_1 , \ldots , {\rm x}_{d-1} \right )$ to be a solution of the ${\rm E}$-system parametrized by the non-empty subset $D$ of $\mathbb{Z}/d\mathbb{Z}$. If we specialize the trace parameters $x_i$ of ${\rm tr}_d$ to the values ${\rm x}_i$  we obtain the {\it specialized trace} ${\rm tr}_{d,D}$ with parameter $z$ \cite{5, 4}.  
\subsection{{\it Classical link invariants from ${\rm Y}_{d,n}(q)$}}\label{brsec}
In order to obtain classical link invariants we first map the classical braid group $B_n$ to the algebra ${\rm Y}_{d,n}(q)$ and then we apply ${\rm tr}_{d,D}$. Note that via this mapping the framing generators of the algebra lose any topological context and they are treated simply as formal generators. More precisely, let $\delta$ be the natural algebra homomorphism $\mathbb{C}(q)B_n \longrightarrow {\rm Y}_{d,n}(q)$ that sends $\sigma_i \mapsto g_i$ and let $ {\rm Y}_{d,n}^{\rm (br)}(q):=\delta(\mathbb{C}(q)B_n)$.  It can be easily shown that the subalgebra of ${\rm Y}_{d,n}(q)  $ that is generated by the elements $g_i$ and $e_j$  ($1 \leq i,j \leq n-1$) coincides with the subalgebra ${\rm Y}_{d,n}^{\rm (br)}(q)$   \cite[Remark~4.2]{4}. Further, we note that the framing generators $t_i$ appear in the computation of the specialized trace ${\rm tr}_{d,D}$ on any $\alpha \in B_n$ only after applying the quadratic relation or the inverse relation  and only through the idempotents $e_i$. Hence, in this case and by the ${\rm E}$-condition, the last rule of  the specialized trace: ${\rm tr}_{d,D}(at_{n+1}^s) =  {\rm x}_s {\rm tr}_{d,D}(a)$, for $s = 1, \ldots , d-1$,  can be substituted by the following two rules \cite[Theorem~4.3]{4}:
 \[
 {\rm tr}_{d,D}(a e_n) = E_D \, {\rm tr}_{d,D}(a) \quad \mbox{and} \quad {\rm tr}_{d,D}(a e_n g_n) = z\, {\rm tr}_{d,D}(a).
 \]
Further, if $|D| = m$ then $E_D:= {\rm tr}_{d,D} (e_i) = 1/m$, for all $1 \leq i \leq n-1$ \cite{21, 18}. Let now $\lambda_D := \frac{z- (q-q^{-1}) E_D}{z}$. We then have the following 2-variable invariant for classical knots and links, which is denoted by $\Theta_{d,D}$ \cite[Theorem~3.1]{4}:
\begin{equation}\label{thetainv}
\Theta_{d,D}(q, \lambda_D)(\widehat{\alpha}) := \left ( \frac{1 - \lambda_D}{\sqrt{\lambda_D} (q-q^{-1})E_D} \right)^{n-1} \left ( \sqrt{\lambda_D} \right)^{\varepsilon(\alpha)} {\rm tr}_{d,D}(\delta ( \alpha ) ) ,
\end{equation}
where $\widehat{\alpha}$ is the closure of the framed braid $\alpha$ and $\varepsilon ( \alpha)$ is the algebraic sum of the exponents of the braiding generators $g_i$ in the braid word $\alpha$.  As proved in \cite[Proposition~4.6]{4} the invariants $\Theta_{d,D}$ do not depend on the sets $D$ that parametrize the solutions of the ${\rm E}$-system, but only on their cardinality, meaning that they are parametrized only by the natural numbers. For this reason, we will always consider that $D=\mathbb{Z}/d\mathbb{Z}$, implying that $E_D = 1/d$ and thus the notation of the invariants $\Theta_{d,D}$ shall be simplified to $\Theta_d$ \cite{4}.  It is worth noting that for $d=1$ the algebra ${\rm Y}_{1,n}(q)$ coincides with the Iwahori-Hecke algebra ${\rm H}_n(q)$ and so the invariant $\Theta_{1,\{0\}}$ coincides with the Homflypt polynomial $P$. Furthermore, as proven in \cite{5} the invariants $\Theta_d$ coincide with the Homflypt polynomial also for the trivial cases $q=1$ and $E_D=1$.  

The invariants $\Theta_d$ are topologically equivalent to $P$ on knots and on  unions of unlinked  knots \cite[Theorem~5.8]{4}. However, they are {\it not topologically equivalent to the Homflypt polynomial  for the case of links for $d\geq 2$} \cite[Theorem~7.3]{4} .  

A very interesting property of the invariants $\Theta_d$ is the fact that they satisfy the well-known skein relation of the Homflypt polynomial but only for {\it mixed crossings}, that is, crossings between different components of the link.  In fact, $\Theta_d$ can be defined through this mixed skein relation together with the conditions that for a  union  $\mathcal{K} = \sqcup _{i=1}^r K_i$ of $r$ disjoint knots, with $r\geq 1$, it holds:
\begin{equation}\label{thetaknot}
\Theta_d (\mathcal{K})= E_D^{1-r} P(\mathcal{K}),
\end{equation}
where $P(\mathcal{K})$ is the value of the Homflypt polynomial on $\mathcal{K}$ \cite[Proposition~6.8 and Theorem~6.2]{4}.

So one can evaluate $\Theta_d$ on a link $L $ by using the skein relation in order to unlink the components of $L$ one by one and then apply \eqref{thetaknot} on the resulting sums of  unions of unlinked knots.

\section{The invariants $\theta_d$ and the Framization of the Temperley--Lieb algebra}\label{ftlsec}

The Framization of the Temperley--Lieb algebra was introduced in \cite{14} as the Temperley--Lieb analogue for the Yokonuma--Hecke algebra with the scope of constructing 1-variable polynomial invariants for framed and classical knots and links (see also \cite{12,13}).

For $n \geq 3$, the {\it Framization of the Temperley--Lieb algebra}, denoted  by ${\rm FTL}_{d,n}(q)$, is defined as the quotient of ${\rm Y}_{d,n}(q)$ over the two-sided ideal generated by the element \cite[Definition~5 and Corollary~1]{14}:
\begin{equation}\label{newftl}
 r_{1,2} := e_1 e_2 \Big( 1 + q(g_1 + g_2) + q^2 (g_1 g_2 + g_2 g_1) + q^3 g_1 g_2 g_1 \Big).
 \end{equation}

One of the challenges that emerged was the determination of the necessary and sufficient conditions so that the trace ${\rm tr}_d$ factors through to the quotient algebra ${\rm FTL}_{d,n}(q)$. Indeed:
\begin{theorem}[{\cite[Theorem~6]{14}}]\label{ftlthmgen}
The trace ${\rm tr}_d$ passes to ${\rm FTL}_{d,n}(q)$ if and only if the parameters of the trace satisfy: 
\begin{align}\label{newqvalx}
x_k = -qz \left(\sum_{m\in {\rm Sup}_1}\chi_{m}(k) + (q^2+1)\sum_{m\in {\rm Sup}_2}\chi_{m}(k) \right),
\end{align}
\begin{align}\label{newqvalz}
 z=-\frac{1}{q\vert {\rm Sup_1}\vert + q(q^2+1)\vert {\rm Sup_2}\vert  }.
\end{align}
where ${\rm Sup}_1\sqcup \rm{Sup}_2$ (disjoint union) is the support of the Fourier transform of $x$, and $x$ is the complex function on $\mathbb{Z}/d\mathbb{Z}$,   
that maps $0$ to $1$ and $k$ to the trace parameter  $x_k$.
\end{theorem}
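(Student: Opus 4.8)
The plan is to reduce the assertion ``${\rm tr}_d$ descends to the quotient'' to the annihilation of the defining ideal, turn that into a finite list of scalar equations, and then identify the solution set by harmonic analysis on $\mathbb{Z}/d\mathbb{Z}$. First, since ${\rm tr}_d$ is a Markov trace, it factors through ${\rm FTL}_{d,n}(q) = {\rm Y}_{d,n}(q)/\langle r_{1,2}\rangle$ for all $n$ if and only if ${\rm tr}_d$ annihilates the two‑sided ideal generated by $r_{1,2}$; by the trace property ${\rm tr}_d(a\,r_{1,2}\,b) = {\rm tr}_d(ba\,r_{1,2})$, this is equivalent to ${\rm tr}_d(w\,r_{1,2}) = 0$ for every $w$ in a spanning set of $\bigcup_n {\rm Y}_{d,n}(q)$. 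Using the inductive linear basis of ${\rm Y}_{d,n}(q)$ (each basis element is an element of ${\rm Y}_{d,n-1}(q)$ times a ``tail'' in $t_n, g_{n-1}, \dots, g_j$), the Markov rules ${\rm tr}_d(a g_n) = z\,{\rm tr}_d(a)$ and ${\rm tr}_d(a t_{n+1}^s) = x_s\,{\rm tr}_d(a)$, the fact that $r_{1,2}\in {\rm Y}_{d,3}(q)$, and the quadratic relation \eqref{yqeq}, one strips off all generators of index $\ge 4$. This leaves a finite system: it suffices to check ${\rm tr}_d(w\,r_{1,2}) = 0$ for $w$ running over a finite spanning set of ${\rm Y}_{d,3}(q)$ and, to propagate across one more Markov level, of ${\rm Y}_{d,4}(q)$ — concretely for words $t_1^{a}t_2^{b}t_3^{c}\,v$ with $v$ in a lift of $S_3$ (resp.\ $S_4$).

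Next comes the explicit evaluation of this finite system. Because $e_1 e_2$ is an idempotent that absorbs framings on strands $1,2,3$ (one checks $e_1 e_2 t_1 = e_1 e_2 t_2 = e_1 e_2 t_3$) and the braiding generators obey $e_i g_i = g_i e_i$ together with the braid relations, each required trace reduces, after repeated use of \eqref{yqeq} and the trace rules, to a polynomial in $z$ and in the quantities $E$, $E^{(m)}$, $x_1,\dots,x_{d-1}$. Expanding $r_{1,2}$ as in \eqref{newftl}, the whole computation comes down to the six building blocks ${\rm tr}_d(e_1 e_2)$, ${\rm tr}_d(e_1 e_2 g_1)$, ${\rm tr}_d(e_1 e_2 g_2)$, ${\rm tr}_d(e_1 e_2 g_1 g_2)$, ${\rm tr}_d(e_1 e_2 g_2 g_1)$, ${\rm tr}_d(e_1 e_2 g_1 g_2 g_1)$ and their $t_i^a$‑twisted analogues. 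Collecting coefficients, the condition ``${\rm tr}_d$ kills the ideal'' becomes an explicit finite family of polynomial equations in the indeterminates $z, x_1, \dots, x_{d-1}$.

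To solve this system, encode the trace parameters as the function $x:\mathbb{Z}/d\mathbb{Z}\to\mathbb{C}$ with $x(0)=1$ and $x(k)=x_k$, and pass to its Fourier transform, exactly as in the analysis of the ${\rm E}$‑system. Re‑expressed in terms of the Fourier coefficients of $x$, the equations of the previous step force the Fourier support to split as a disjoint union ${\rm Sup}_1\sqcup{\rm Sup}_2$ on which the weights differ by the factor $q^2+1$, and they force the normalization that pins down $z$ as in \eqref{newqvalz}; inverting the Fourier transform then produces exactly \eqref{newqvalx}. For the converse implication one substitutes \eqref{newqvalx}--\eqref{newqvalz} back into the finite system and verifies, using orthogonality of the characters $\chi_m$, that every equation is satisfied; this establishes the ``if'' direction.

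The genuinely laborious part is the explicit evaluation in the second step — in particular computing ${\rm tr}_d(e_1 e_2 g_1 g_2 g_1)$ and its framed twists — and, on top of that, recognizing in the third step that the resulting nonlinear system is equivalent to precisely the ``two weighted parts'' support condition. That recognition, rather than any single manipulation, is where the shape of the answer is actually earned; the reduction in the first step and the converse check are routine once the Markov‑trace formalism is in place.
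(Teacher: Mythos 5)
A preliminary remark on the comparison you were asked to survive: this paper does not prove Theorem~\ref{ftlthmgen} at all --- it is imported verbatim from \cite[Theorem~6]{gojukola2}, so there is no in-house proof to measure your attempt against; the closest internal analogue is the proof of Proposition~\ref{ptlthm} via Lemma~\ref{ptllem}. Measured against the actual argument in \cite{gojukola2}, your strategy is the right one: reduce to ${\rm tr}_d$ annihilating the principal ideal generated by $r_{1,2}$ from \eqref{newftl}, use the inductive basis and the Markov rules to cut the check down to traces of elements $t_1^{a}t_2^{b}t_3^{c}\,\mathfrak{m}\,r_{1,2}$ with $\mathfrak{m}$ a braiding monomial on three strands, exploit $e_1e_2t_1=e_1e_2t_2=e_1e_2t_3$ to collapse the framing twists to a single shift (so that only the quantities $E^{(m)}$ and the $x_k$ survive), and finally solve the resulting system by Fourier analysis on $\mathbb{Z}/d\mathbb{Z}$ exactly as for the ${\rm E}$-system \eqref{Esystem} in \cite{jula}.

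That said, as written this is a roadmap rather than a proof, and the two places where the content actually lives are both deferred. First, the reduction to a finite check needs the framed analogue of Lemma~\ref{ptllem}, namely that left multiplication of $r_{1,2}$ by a basis monomial returns $q^{k}t_1^{m}\,r_{1,2}$ up to the absorption above; without such a lemma your assertion that a spanning set of ${\rm Y}_{d,3}(q)$ (or ${\rm Y}_{d,4}(q)$) suffices is unsupported, since tails $g_{n-1}\cdots g_j$ with $j\le 3$ do not commute with $r_{1,2}$ and the quadratic relation \eqref{yqeq} reintroduces lower-index terms. Second, the decisive step --- that after passing to the Fourier transform the nonlinear system factorizes mode by mode, so that each Fourier coefficient in the support satisfies one of exactly two linear conditions, whence the partition ${\rm Sup}_1\sqcup{\rm Sup}_2$, the two weights differing by $q^2+1$ in \eqref{newqvalx}, and the normalization \eqref{newqvalz} forced by $x_0=1$ --- is stated as something to be ``recognized'' rather than derived. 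This is the analogue, shifted by $m$ and read on the Fourier side, of the displayed factorization $\left((q^2+1)qz+E\right)\left(qz+E\right)=0$ in the proof of Proposition~\ref{ptlthm}, and it is precisely the lemma your argument needs and does not contain. You correctly locate where the difficulty is, but you do not discharge it.
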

We note that \eqref{newqvalx} includes all solutions of the ${\rm E}$-system.  These solutions can be recovered by simply letting either ${\rm Sup}_1 = \emptyset$ or ${\rm Sup}_2 = \emptyset$ \cite[Corollary~3]{14}. Then, the trace parameter $z$ takes the value $z= - q^{-1}E_D / (q^2+1)$ or $ z= q^{-1} E_D$ respectively \cite[Section~7]{14}. The value $z = -q^{-1} E_D$ is discarded since it is of no topological importance \cite[Remark~10]{14}. Specializing now  $z= - q^{-1}E_D/(q^2+1)$ in \eqref{thetainv} we obtain the following 1-variable invariant for classical knots and links.
\begin{equation}\label{onevarinv}
\theta_d(q)(\widehat{\alpha}) := \left ( - \frac{1+q^2}{qE_D} \right )^{n-1} q^{2 \varepsilon(\alpha)} {\rm tr}_{d,D}\left ( \delta ( \alpha) \right ) = \Theta_d (q, q^4 ) (\widehat{\alpha}),
\end{equation}
where $\varepsilon(\alpha)$ and $\delta$ as in \eqref{thetainv}. 

\begin{remark}\label{remspec} \rm
As mentioned in the previous section the invariants $\Theta_d$ are topologically equivalent to the Homflypt polynomial $P$ on knots and  unions of unlinked knots, while they are not topologically equivalent to the polynomial $P$ for the case of links. For $z= - q^{-1}E_D/ (q^2+1)$ these properties carry through to the invariants $\theta_d$ when compared to the Jones polynomial $V$. Furthermore, $\Theta_d$ and $\theta_d$ have analogous properties to the Homflypt polynomial, see \cite{6}.
\end{remark}
For $d\in \mathbb{Z}_{>1}$, the invariants $\theta_d(q)$  for classical links are not topologically equivalent to the Jones polynomial. Further, the invariants $\theta_d(q)$ satisfy the following special skein relation:
$
q^{-2} \theta_d ( L_+) - q^2 \theta_d(L_-) = (q - q^{-1} ) \theta_d(L_0),
$
where $L_+$, $L_-$, $L_0$ constitute an oriented Conway triple involving different components  \cite[Theorem~9]{14}.

\section{The invariant $\Theta$ and the algebra of braids and ties}\label{btsec}

The aim of this paper is to introduce a new 2-variable generalization of the Jones polynomial using the invariants $\theta_d$ (see next section). The cornerstone of this construction is the 3-variable generalization $\Theta$ of the invariants $\Theta_d$ constructed in \cite{4} and which will be presented here. The well-definedness of $\Theta$ has been proved both algebraically \cite{4} and skein-theoretically \cite{27,28}. We will present all approaches, as they are all very informative.

\subsection{{\it $\Theta$ via the algebra of braids and ties $\mathcal{E}_n(q)$}} We start introducing the so-called {\it algebra of braids and ties} $\mathcal{E}_n(q)$ that was first introduced by Juyumaya and Aicardi in \cite{1}. The algebra $\mathcal{E}_n(q)$ is the $\mathbb{C}(q)$-algebra that is generated by the elements $b_1, \ldots, b_{n-1}, \epsilon_1,$ $\ldots,\epsilon_{n-1}$ that satisfy the following relations:
\begin{equation}\label{btpres}
\begin{aligned}
b_i b_{i+1} b_i &= b_{i+1} b_i b_{i+1}&\\
b_i b_j &= b_j b_i \quad &\mbox{for }  |i-j|>1\\
\epsilon_i \epsilon_j &=\epsilon_j \epsilon_i \quad &\mbox{for } |i-j|>1\\
\epsilon_i^2 &= \epsilon_i&\\
\epsilon_ib_i &= b_i \epsilon_i&\\
\epsilon_i b_j &= b_j\epsilon_i \quad &\mbox{for } |i-j|>1\\
\epsilon_i\epsilon_j b_i &= b_i \epsilon_i \epsilon_j  = \epsilon_j b_i \epsilon_j \quad &\mbox{for } |i-j|=1\\
\epsilon_i b_j b_i &= b_j b_i \epsilon_j \quad &\mbox{for } |i-j|=1\\
b_i^2& =  1 +(q-q^{-1})\epsilon_ib_i &
\end{aligned}
\end{equation}

\begin{remark}\label{btswitch} \rm
Originally the algebra of braids and ties was introduced by in \cite{1} with a presentation using generators $\widetilde{b}_1, \ldots, \widetilde{b}_{n-1}$, $\epsilon_1, \ldots , \epsilon_{n-1}$ that satisfy all relations in the presentation \eqref{btpres} except for the quadratic relation which is replaced by one  with parameter $u$ instead of $q$, namely:
\[
(\widetilde{b}_i)^2 = 1 + (u-1)\epsilon_i + (u-1)\epsilon_i \widetilde{b}_i
\]
The isomorphic algebra $\mathcal{E}_n(q)$ was introduced in \cite{4}. The algebra isomorphism is the transformation induced by $b_i := \widetilde{b}_i +(q^{-1} -1) \epsilon_i \widetilde{b}_i$ (or equivalently $\widetilde{b}_i := b_i +(q-1) \epsilon_i b_i$) and choosing $u=q^2$.
\end{remark}

Denote now by $\mathbf{n}$ the set $\{1, \ldots , n \}$ and by $P(n)$ the set of all partitions of $\mathbf{n}$. Let $I \in P(n)$ and let  $I_j \in I$ with $I_j = \{ i_1 ,\ldots , i_m \}$. We then define the following elements in $\mathcal{E}_n(q)$: 
\[\epsilon_{I_j} := \epsilon_{i_1, i_2}\epsilon_{i_2, i_3} \ldots \epsilon_{i_{m-1}, i_m},
\] 
where $\epsilon_{i, j} = b_i\ldots b_{j-2}\epsilon_{j-1}b_{j-2}^{-1} \ldots b_i^{-1}$, for $1 \leq i < j \leq n$. Notice that with this notation we have that $\epsilon_{i, i+1} = \epsilon_i$. Let now  $I = \{ I_1, \ldots , I_r \}$ be a partition of $\mathbf{n}$. Using the above notation we further define:
\[
\epsilon_I = \prod_{k} \epsilon_{I_k}
\]
Then the following set is a canonical basis for the algebra $\mathcal{E}_n(q)$ \cite{32}:\small
\[
\mathfrak{B}_{\mathcal{E}_n(q)} = \left \{ \epsilon_I (b_{i_1} \ldots b_{k_1} ) \ldots ( b_{i_p} \ldots b_{k_p} ) \, \vert \, I\in P(n), \ 1 \leq i_1 < \ldots < i_p \leq n-1, \ k_j \leq i_j , \ 1 \leq j \leq n-1\right \}.
\]\normalsize
The dimension of the algebra $\mathcal{E}_n(q)$ is ${\rm dim}_{\mathbb{C}(q)} \mathcal{E}_n(q) = \beta_n \,n !$, where $\beta_n$ is the $n$-th Bell number.
For example for $n=3$, we have ${\rm dim}_{\mathbb{C}(q)} \mathcal{E}_3(q) = 30$ and the following set is a basis for $\mathcal{E}_3(q)$:
\begin{equation}\label{e3qbas}
\mathfrak{B}_{\mathcal{E}_3(q)} =\left \{ \epsilon_I, \epsilon_I b_1, \epsilon_I b_2, \epsilon_I b_1 b_2 ,\epsilon_I b_2 b_1 , \epsilon_I b_1 b_2 b_1  \ \vert  \ I \in P(3) \right \}.
\end{equation}
Moreover, the algebra $\mathcal{E}_n(q)$ supports a unique Markov trace $\rho: \bigcup_{n \geq 0} \mathcal{E}_n(q) \rightarrow \mathbb{C}[q^{\pm 1} , z^{\pm 1} , E^{\pm 1}]$ that can be defined using the following rules \cite[Theorem~3]{2}:
\[
\begin{array}{crcll}
(i) & \rho(ab) &= &\rho (ba) & a,b \in \mathcal{E}_n(q)\\
(ii) & \rho(1) & = & 1 & \\
(iii) & \rho ( a b_n) & =& z \rho (a) & a \in \mathcal{E}_n(q)\\
(iv) & \rho (a \epsilon_n) & = & E \rho (a) &a \in \mathcal{E}_n(q)\\
(v) & \rho (a \epsilon_n b_n) & = & z \rho( a) & a \in \mathcal{E}_n(q)
\end{array}
\]
Notice now the resemblance of the rules of the trace $\rho$ with the rules of the specialized trace ${\rm tr}_{d,D}$ and recall the discussion of Section~\ref{brsec} regarding the subalgebra ${\rm Y}_{d,n}^{({\rm br})}(q)$.
\smallbreak

 The Markov trace $\rho$ gives rise to a 3-variable invariant $\overline\Theta$ of {\it tied links} \cite{4}. The concept of a tied link was first introduced by Juyumaya and Aicardi in \cite{3}. A tied link is a classical link together with a set of ties, that is, a set containing unordered pairs of points that belong to the components of the link. The ties can be seen as springs that can slide along the component(s) that they connect and can cross freely any arc of the diagram. The components that are connected with a tie are not necessarily distinct. If two ties join the same two components, one of them can be removed, and any tie on a single component can be also removed. A tie that cannot be removed is called {\it essential}. A tied link is obtained by closing a {\it tied braid}. The {\it tied braid monoid} $TB_n$ (defined in \cite{3}) is generated by the braiding generators $\sigma_1, \ldots , \sigma_{n-1}$ and the generating ties $\eta_1 , \ldots , \eta_{n-1}$, where $\eta_i$ connects the $i$-th with the $(i+1)$-st strand of a tied braid. The algebra $\mathcal{E}_n(q)$ is a quotient of the algebra $\mathbb{C}(q)TB_n$ over the ideal generated by the quadratic relations for the $b_i$'s, recall \eqref{btpres}. Denote by $\pi : \mathbb{C}(q)TB_n \longrightarrow \mathcal{E}_n(q)$ the natural surjection defined by $\sigma_i \mapsto b_i$ and $\eta_i \mapsto \epsilon_i$ and let:
 \begin{equation}\label{lambdaval}
 \lambda = \frac{z - (q-q^{-1}) E}{z}.
 \end{equation}
  The invariant $\overline \Theta$ is defined as follows \cite{4}:
\begin{equation}\label{ovthet}
\overline{\Theta}(q, \lambda, E) (\widehat{\alpha}) = \left( \frac{1 - \lambda}{\sqrt{\lambda} (q-q^{-1}) E} \right )^{n-1} \sqrt{\lambda}^{\varepsilon(\alpha)} \rho(\pi(\alpha)),
\end{equation}
where $\alpha \in TB_n$ and $\varepsilon(\alpha)$ as in \eqref{thetainv}. We note that for $E=1$ the invariant $\overline\Theta$ coincides with the Homflypt polynomial.  Furthermore, the invariant $\overline\Theta$ restricts to a 3-variable invariant of classical links denoted by $\Theta$, namely:
\begin{equation}\label{thetacl}
\Theta(q, \lambda, E) (\widehat{\alpha}) = \left( \frac{1 - \lambda}{\sqrt{\lambda} (q-q^{-1}) E} \right )^{n-1} \sqrt{\lambda}^{\varepsilon(\alpha)} \rho(\overline\pi(\alpha)),
\end{equation}
where  $\alpha \in B_n$, $\varepsilon(\alpha)$ as in \eqref{ovthet} and  $\overline\pi : \mathbb{C}B_n \longrightarrow \mathcal{E}_n(q)$ the algebra homomorphism that sends $\sigma_i \mapsto b_i$.
We note that, originally, an invariant for tied links $\overline\Delta$ was defined in \cite{3}.
\begin{remark}\label{thetcoinc} \rm
For $E = 1/d$, with $d \in \mathbb{N}$, the invariant $\Theta(q, \lambda , 1/d)$ coincides with the invariant $\Theta_d (q , \lambda_d)$.
\end{remark}

\subsection{\it $\Theta$ as a generalization of the invariants $\Theta_d$}\label{thetaclrem}

Recall the discussion in Section~\ref{brsec} regarding the subalgebra ${\rm Y}^{\rm (br)}_{d,n}(q)$. Let $\phi$ be the mapping $\phi : \mathcal{E}_n(q) \longrightarrow {\rm Y}_{d,n}(q)$ that sends $b_i \mapsto g_i$ and $\epsilon_i \mapsto e_i$. In \cite[Theorem~8]{10} it was proven that the map $\phi$ is an embedding for $d\geq n$. We thus have the following result \cite{10,4, 15}:

\begin{proposition}\label{isoprop}
For $d \geq n$, the algebra of braids and ties $\mathcal{E}_n(q)$ is isomorphic to the subalgebra ${\rm Y}_{d,n}^{({\rm br})}(q)$ of ${\rm Y}_{d,n}(q)$.
\end{proposition}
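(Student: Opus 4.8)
The plan is to leverage the embedding $\phi : \mathcal{E}_n(q) \to {\rm Y}_{d,n}(q)$ established in \cite[Theorem~8]{ERH} (cited in the paragraph preceding the statement) together with the identification of its image. First I would recall that, by \cite[Remark~4.2]{ChJuKaLa} quoted in Section~\ref{brsec}, the subalgebra ${\rm Y}_{d,n}^{\rm (br)}(q) = \delta(\mathbb{C}(q)B_n)$ coincides with the subalgebra of ${\rm Y}_{d,n}(q)$ generated by the elements $g_1,\ldots,g_{n-1}$ and $e_1,\ldots,e_{n-1}$. Since $\phi$ sends the generators $b_i \mapsto g_i$ and $\epsilon_i \mapsto e_i$, the image $\phi(\mathcal{E}_n(q))$ is precisely the subalgebra generated by all the $g_i$ and all the $e_i$, which is exactly ${\rm Y}_{d,n}^{\rm (br)}(q)$. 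Hence $\phi$ is a surjection onto ${\rm Y}_{d,n}^{\rm (br)}(q)$.

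The next step is to verify that $\phi$ is well defined as an algebra homomorphism, i.e.\ that the defining relations \eqref{btpres} of $\mathcal{E}_n(q)$ are satisfied by the images $g_i, e_i$ in ${\rm Y}_{d,n}(q)$. The braid relations for the $g_i$ hold in any quotient of the framed braid group; the idempotent relation $e_i^2 = e_i$ and the commuting relations $e_i e_j = e_j e_i$ and $e_i g_j = g_j e_i$ for $|i-j|>1$ are immediate from the definition $e_i = \frac{1}{d}\sum_{s} t_i^s t_{i+1}^{d-s}$; the relation $e_i g_i = g_i e_i$ follows from the action $\sigma_i t_j = t_{s_i(j)}\sigma_i$, which fixes $e_i$ under conjugation by $g_i$; and the mixed relations $e_i e_j g_i = g_i e_i e_j = e_j g_i e_j$ and $e_i g_j g_i = g_j g_i e_j$ for $|i-j|=1$, together with the quadratic relation $g_i^2 = 1 + (q-q^{-1}) e_i g_i$, are exactly the known identities in ${\rm Y}_{d,n}(q)$ (see \cite{ju,jula2}). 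These are all routine verifications of the sort already recorded in the literature.

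Finally, to upgrade $\phi$ from a surjection to an isomorphism onto its image it suffices to show injectivity when $d \geq n$, which is precisely the content of \cite[Theorem~8]{ERH}, and we simply cite it. Alternatively — and this is the cleaner bookkeeping route — one compares dimensions: $\dim_{\mathbb{C}(q)} \mathcal{E}_n(q) = \beta_n\, n!$ with $\beta_n$ the $n$-th Bell number, and for $d \geq n$ the subalgebra ${\rm Y}_{d,n}^{\rm (br)}(q)$ is known to have the same dimension (the basis being $e_I g_w$ for $I$ a set partition of $\mathbf{n}$ and $w \in S_n$, which matches the canonical basis $\mathfrak{B}_{\mathcal{E}_n(q)}$ under $\phi$); a surjective linear map between finite-dimensional spaces of equal dimension is an isomorphism. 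Either way, $\phi$ restricts to an algebra isomorphism $\mathcal{E}_n(q) \xrightarrow{\ \sim\ } {\rm Y}_{d,n}^{\rm (br)}(q)$.

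The main obstacle is really the injectivity of $\phi$ for $d \geq n$: the constraint $d \geq n$ is essential (for small $d$ the idempotents $e_i$ satisfy extra relations, so $\phi$ has a kernel), and proving injectivity directly requires either the representation-theoretic argument of \cite{ERH} or a careful dimension count establishing that $\dim {\rm Y}_{d,n}^{\rm (br)}(q) = \beta_n\, n!$ for $d \geq n$. Since \cite[Theorem~8]{ERH} is available to us, we invoke it and the remaining steps are formal.
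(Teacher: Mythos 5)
Your proposal is correct and follows essentially the same route as the paper, which simply combines the embedding $\phi$ of \cite[Theorem~8]{ERH} with the identification of its image as the subalgebra generated by the $g_i$ and $e_i$, i.e.\ ${\rm Y}_{d,n}^{({\rm br})}(q)$, per \cite[Remark~4.2]{ChJuKaLa}. The extra detail you supply (checking the relations \eqref{btpres} in ${\rm Y}_{d,n}(q)$ and the optional dimension count) is consistent with the literature but not part of the paper's argument, which rests entirely on the citation.
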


From Proposition~\ref{isoprop},  if  $E_D = 1/d$ is considered as an indeterminate in the rules of the specialized trace ${\rm tr}_{d,D}$ restricted to ${\rm Y}^{(\rm br)}_{d,n}(q)$, then it is well-defined since it coincides with the trace $\rho$ on $\mathcal{E}_n(q)$ and, therefore, the invariant $\Theta$ can be constructed directly through ${\rm Y}_{d,n}^{(\rm br)}(q)$.

\subsection{\it $\Theta$ via a skein relation} We have the following result from \cite{4}: 
\begin{theorem}[{\cite[Theorem~8.1]{4}}]\label{btmainthm}
Let $\mathcal{L}$ be the set of all oriented links and let $q, \lambda, E$ be indeterminates. Then the isotopy invariant of classical oriented links
\[
\Theta : \mathcal{L} \rightarrow \mathbb{C}[q^{\pm 1} , \lambda^{\pm 1}, E^{\pm 1}]
\]
can be uniquely defined by the following rules:
\begin{enumerate}
\item On crossings involving different components the following skein relation holds:
\[
\frac{1}{\sqrt \lambda}\, \Theta (L_+) - \sqrt \lambda \, \Theta (L_-) = (q - q^{-1})\, \Theta (L_0)
\]
where $L_+$, $L_-$ and $L_0$ constitute an oriented Conway triple.
\item For a  union $\mathcal{K} = \sqcup_{i=1}^r K_i$ of $r$ unlinked knots, with $r \geq 1$, it holds that:
\[
\Theta (\mathcal{K}) = E^{1-r}  P(\mathcal{K})
\]
where $P(\mathcal{K})$ is the value of the Homflypt polynomial on $\mathcal{K}$.

\end{enumerate}

\end{theorem}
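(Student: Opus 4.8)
The plan is to prove the statement in two halves that reinforce each other. First, \emph{existence}: exhibit a concrete isotopy invariant obeying rules (1) and (2) --- namely the invariant $\Theta$ of \eqref{thetacl}, built from the Markov trace $\rho$ on $\mathcal{E}_n(q)$ (equivalently, via Proposition~\ref{isoprop}, from the specialized trace on ${\rm Y}^{(\rm br)}_{d,n}(q)$), which is already known to be an ambient isotopy invariant. Second, \emph{uniqueness}: show that rules (1) and (2) force the value of any map $\mathcal{L}\to\mathbb{C}[q^{\pm1},\lambda^{\pm1},E^{\pm1}]$ that satisfies them, by means of a recursion that terminates. The two halves interlock: once existence is established, the recursion in the uniqueness half is automatically consistent (independent of all diagrammatic choices), because it can only ever output the value of the invariant already constructed.

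For existence, rule (1) comes from the quadratic relation $b_i^2 = 1+(q-q^{-1})\epsilon_i b_i$ of \eqref{btpres}, rewritten as $b_i - b_i^{-1} = (q-q^{-1})\epsilon_i$. Given a Conway triple $(L_+,L_-,L_0)$ at a crossing between two \emph{distinct} components, realize (via Alexander's and Markov's theorems, and a conjugation) $L_\pm=\widehat{w\,\sigma_i^{\pm1}}$ and $L_0 = \widehat{w}$ for a common braid $w$ --- note that the oriented smoothing of $\sigma_i$ is exactly its deletion. Applying $\overline{\pi}$ and $\rho$ turns the identity above into $\rho(\overline{\pi}(w)b_i) - \rho(\overline{\pi}(w)b_i^{-1}) = (q-q^{-1})\rho(\overline{\pi}(w)\epsilon_i)$. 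The hypothesis ``different components'' enters through the key lemma --- the algebraic shadow of the fact that a tie lying on a single component is removable --- that $\rho(a\,\epsilon_i)=\rho(a)$ whenever strands $i$ and $i+1$ lie in the same component of the closure of $a$; in $L_0=\widehat{w}$ the two strands through the smoothed crossing lie in the component formed by merging the two original ones, so the right-hand side collapses to $(q-q^{-1})\rho(\overline{\pi}(w))$. Rescaling by $\big(\tfrac{1-\lambda}{\sqrt{\lambda}(q-q^{-1})E}\big)^{n-1}\sqrt{\lambda}^{\,\varepsilon(w)}$ and using $\varepsilon(w\sigma_i^{\pm1})=\varepsilon(w)\pm1$ (the number of strands being unchanged) then produces the displayed skein relation. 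Rule (2) comes from writing $K_j=\widehat{\alpha_j}$, $\alpha_j\in B_{n_j}$, so that $\mathcal{K}$ is the closure of the block juxtaposition, and using that $\rho$ is multiplicative on such juxtapositions, $\rho(\overline{\pi}(\alpha_1)\otimes\cdots\otimes\overline{\pi}(\alpha_r))=\prod_j\rho(\overline{\pi}(\alpha_j))$, proved by induction from rules (iii)--(v) of $\rho$; combining this with the value of $\Theta$ on a single knot --- which, after the $E$-dependence cancels by way of the relation $\lambda=\tfrac{z-(q-q^{-1})E}{z}$, is the Homflypt polynomial --- and collecting normalization exponents gives rule (2).

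For uniqueness, fix a link $L$ with $r$ components and a diagram; order the components and orient them, and call a crossing between two distinct components \emph{ascending} if the later-visited strand is the overstrand. Let $c$ be the number of ascending crossings, and induct on the pair $(r,c)$ ordered lexicographically. If $c=0$ then --- self-crossings being irrelevant to this condition --- the components separate by a planar isotopy and $L=K_1\sqcup\cdots\sqcup K_r$ with $K_j$ the knot type of $C_j$, so rule (2) (or, for $r=1$, the coincidence of $\Theta$ with the Homflypt polynomial on knots) determines $\Theta(L)$. If $c>0$, choose an ascending mixed crossing and apply rule (1): this expresses $\Theta(L)$ through $\Theta(L')$, with $L'$ having the same $r$ but $c-1$ ascending crossings, and $\Theta(L_0)$, with $L_0$ having $r-1$ components; both are known by induction, and since $\sqrt{\lambda}\neq0$ one solves for $\Theta(L)$.

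The crux is the existence half, and within it the lemma $\rho(a\,\epsilon_i)=\rho(a)$ for strands $i,i+1$ in a common component: it is this lemma that makes rule (1) hold for mixed crossings, and its failure for self-crossings (where the smoothing \emph{splits} a component and the resulting tie is essential) is exactly why the skein relation does not extend to all crossings. The remaining work --- verifying that the normalization factors match on the nose in both rules, and that the $E$-dependence genuinely disappears on knots via the choice of $\lambda$ --- is routine bookkeeping rather than a conceptual obstacle.
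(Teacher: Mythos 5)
Your overall architecture --- existence via the Markov trace $\rho$ on $\mathcal{E}_n(q)$, uniqueness via a lexicographic induction on (number of components, number of ascending mixed crossings) --- is the right one. Note that the present paper does not prove this theorem at all: it imports it from \cite{ChJuKaLa} (see Remark~\ref{diagrem}), and your plan is essentially the argument of that source. Your derivation of rule (1) from $b_i-b_i^{-1}=(q-q^{-1})\epsilon_i$ together with the lemma that $\rho(a\,\epsilon_i)=\rho(a)$ when strands $i$ and $i+1$ close up to the same component is correct and is indeed the heart of the matter.

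The genuine gap is in the step you dismiss as ``routine bookkeeping'': the normalization exponents for rule (2) do \emph{not} match on the nose. Granting multiplicativity of $\rho$ on juxtaposed braids and $\Theta|_{\text{knots}}=P|_{\text{knots}}$, the definition \eqref{thetacl} gives, for $\mathcal{K}=\sqcup_{i=1}^rK_i$ presented on $n=\sum_i n_i$ strands,
\[
\Theta(\mathcal{K})=\Delta^{\,n-1}\prod_{i=1}^r\Delta^{\,1-n_i}\,\Theta(K_i)=\Delta^{\,r-1}\prod_{i=1}^rP(K_i),\qquad \Delta:=\frac{1-\lambda}{\sqrt{\lambda}\,(q-q^{-1})\,E},
\]
which equals $E^{1-r}P(\mathcal{K})$ (the Homflypt polynomial of the split union itself) rather than $E^{1-r}\prod_iP(K_i)$; the two differ by the factor $\mu^{r-1}$ with $\mu=\frac{\lambda^{-1/2}-\lambda^{1/2}}{q-q^{-1}}$, which is not $1$ when $\lambda$ is an indeterminate. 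This discrepancy is corroborated inside the paper itself: Theorem~\ref{licktheta} applied to a split union of knots yields $\Theta(\mathcal{K})=\mu^{r-1}E^{1-r}\prod_iP(K_i)$ (using $\sum_kS(r,k)E_k=E^{1-r}$ and $P(\sqcup_{i\in B}K_i)=\mu^{|B|-1}\prod_{i\in B}P(K_i)$), and the Thistlethwaite computation uses the term $E^{-1}V_{Unlinked}$, i.e.\ $E^{-1}$ times the Jones polynomial of the two-component split link, not $E^{-1}$ times the product over components. So either rule (2) must be read with $P(\mathcal{K})$ in place of $\prod_iP(K_i)$, or your construction does not produce the stated invariant; in either case the computation you omit is exactly where the proof succeeds or fails, and it cannot be waved off. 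Two smaller points: rules (1)--(2) as stated impose nothing when $r=1$, so the initial condition $\Theta(K)=P(K)$ on knots that you invoke in the base case of the induction is an additional rule and must be declared as such, otherwise uniqueness is false; and the two inputs you cite in passing --- removability at the level of $\rho$ of a tie on a single component, and the cancellation of the $E$-dependence on knots via \eqref{lambdaval} --- are substantive theorems of \cite{AiJu1} and \cite{ChJuKaLa} respectively, not one-line observations.
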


\noindent Using the skein relation (1) of Theorem~\ref{btmainthm}, it was proved in \cite{4} that the invariants $\Theta_d$ distinguish a pair of $P$-equivalent links (in fact more pairs were found in \cite{4} using computational methods \cite{24}). Since the invariant $\Theta$ contains the Homflypt polynomial as well as the family of invariants $\left \{ \Theta_d \right \} $, one can now easily derive the following:

\begin{theorem}[{\cite[Theorem~8.2]{4}}]
The invariant $\Theta(q,\lambda, E)$ of classical oriented links is stronger than the Homflypt polynomial.
\end{theorem}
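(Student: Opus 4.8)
The plan is to unpack the meaning of ``stronger than'' and then verify each half by a specialization argument, so no new computation is really needed. Recall that an invariant $I$ is \emph{stronger than} an invariant $J$ if $I(L_1)\neq I(L_2)$ whenever $J(L_1)\neq J(L_2)$ (equivalently, $J$ factors through $I$ on isotopy classes), and strictly so if, in addition, there is a pair $L_1,L_2$ with $J(L_1)=J(L_2)$ but $I(L_1)\neq I(L_2)$. Thus I would establish two things: (a) $\Theta$ dominates the Homflypt polynomial $P$, and (b) the domination is strict.

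For (a) I would use the fact recorded above that setting $E=1$ in the defining formula \eqref{thetacl} makes $\overline\Theta$, hence its restriction $\Theta$, coincide with $P$. Concretely, for a fixed link $L$ the value $\Theta(q,\lambda,E)(L)$ lies in $\mathbb{C}[q^{\pm1},\lambda^{\pm1},E^{\pm1}]$, and the substitution $E\mapsto 1$ carries it to $P(L)$ expressed in the appropriate pair of Homflypt variables. Hence $\Theta(q,\lambda,E)(L_1)=\Theta(q,\lambda,E)(L_2)$ forces $P(L_1)=P(L_2)$ after this substitution; the contrapositive is exactly the assertion that $\Theta$ distinguishes every pair $P$ distinguishes.

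For (b) I would invoke the pair of $P$-equivalent classical links referred to just before the statement — the pair on which some $\Theta_d$, $d\geq 2$, is known to take distinct values \cite{ChJuKaLa}. By Remark~\ref{thetcoinc}, substituting $E\mapsto 1/d$ in $\Theta(q,\lambda,E)$ (together with the attendant substitution $\lambda\mapsto\lambda_d$) yields $\Theta_d(q,\lambda_d)$. So if $\Theta(q,\lambda,E)$ agreed on that pair, then specializing would give $\Theta_d(L_1)=\Theta_d(L_2)$, contradicting the known separation. Therefore $\Theta$ distinguishes a $P$-equivalent pair, and combined with (a) this proves that $\Theta$ is strictly stronger than $P$.

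The only genuinely delicate point — the step I expect to require the most care — is the variable bookkeeping under these specializations: one must check that the normalizing prefactor in \eqref{thetacl} is well-behaved as $E\to 1$ (no spurious pole or zero, i.e.\ $\lambda\neq 1$ and $(q-q^{-1})E$ remain invertible in the relevant ring), and that under $E\mapsto 1/d$ the variable $\lambda$ really specializes to $\lambda_d=\bigl(z-(q-q^{-1})/d\bigr)/z$ as in Remark~\ref{thetcoinc}, so that the specialized invariant is literally $\Theta_d$ rather than some reparametrization of it. Once this is pinned down, both implications follow immediately from the already-established coincidences $\Theta|_{E=1}=P$ and $\Theta|_{E=1/d}=\Theta_d$, plus the known computational fact that some $\Theta_d$ separates a $P$-equivalent pair.
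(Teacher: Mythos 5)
Your proposal is correct and follows essentially the same route as the paper: the source argument is precisely that $\Theta$ specializes to $P$ at $E=1$ and to the invariants $\Theta_d$ at $E=1/d$, and since some $\Theta_d$ separates a $P$-equivalent pair, so does $\Theta$. Your write-up merely makes explicit the domination/strictness bookkeeping that the paper leaves as a one-line remark.
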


\begin{remark} \label{diagrem} \rm
Theorem~\ref{btmainthm} was proved in \cite{4} using the algebraic approach to $\Theta$. A self-contained skein theoretical approach to the construction of the invariant $\Theta$ is due to Louis H. Kauffman and the second author. It is implemented in \cite{27} in a more general context, where also the Kauffman (Dubrovnik) polynomial is generalized. Consequently,  new state sum models for the new invariants are defined in \cite{28}. In the notation of \cite{27} we have $\Theta(q, \lambda, E) = P[P](\zeta, a , E)$ for $a = 1 / \sqrt \lambda$ and $\zeta = q - q^{-1}$.
\end{remark}

\section{The two-variable invariant $\theta$}\label{ptlsec}

We shall now define our 2-variable generalization of the Jones polynomial, $\theta$, as claimed in the title of the paper. The invariant $\theta$ generalizes also the 1-variable invariants of classical links $\theta_d$. As for the invariant $\Theta$, different methods can be employed for defining $\theta$. We will present them all and we will show that the resulting invariants coincide. We begin with the algebraic approaches.

\subsection{{\it $\theta$ via the partition Temperley--Lieb algebra}} The Temperley--Lieb analogue for the algebra $\mathcal{E}_n(q)$ is the  partition Temperley--Lieb algebra, denoted ${\rm PTL}_{n}(q)$, which was first introduced by Juyumaya in \cite{19}. Our aim is to use this  algebra  in order to generalize the 1-variable  invariants $\theta_d$ for classical links that are derived from ${\rm FTL}_{d,n}(q)$, to a stronger 2-variable invariant. For $n\geq 3$, the  algebra ${\rm PTL}_n(q)$ is the quotient of the algebra $\mathcal{E}_n(q)$ over the ideal that is generated by the elements:
\begin{equation}\label{ptldefrel}
b_{i,i+1} := \epsilon_i \epsilon_{i+1} \left( 1+ q(b_i + b_{i+1} ) + q^2 (b_i b_{i+1} + b_{i+1} b_i ) + q^3 b_i b_{i+1} b_i \right), \quad 1\leq i \leq n-1.
\end{equation}
Thus, in terms of generators and relations, the algebra ${\rm PTL}_{d,n}(q)$ is generated by the elements $b_1, \ldots , b_{n-1}, \epsilon_1 , \ldots , \epsilon_{n-1}$, subject to the relations \eqref{btpres} plus the relation $b_{i,i+1}=0$, for $ 1 \leq i \leq n-1$. It can be easily shown \cite{19} that the defining ideal of the algebra ${\rm PTL}_{d,n}(q)$ is principal and that it is generated by the element $b_{1,2}$. Next we present the necessary and sufficient conditions so that the Markov trace $\rho$ on the algebra $\mathcal{E}_n(q)$ factors through to the quotient algebra ${\rm PTL}_n(q)$.
\begin{proposition}\label{ptlthm}
The trace $\rho$ on the algebra $\mathcal{E}_n(q)$ passes to the algebra ${\rm PTL}_n(q)$ if and only if:
\[
z = -\frac{q^{-1}E}{q^2+1} \quad \mbox{or} \quad z=-\frac{q^{-1}}{E}.
\]
\end{proposition}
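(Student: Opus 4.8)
The plan is to read passage to the quotient as a vanishing condition for $\rho$ on the defining ideal, to reduce that condition to a finite computation in $\mathcal{E}_3(q)$, and then to solve the resulting equation in $z$; this mirrors the treatment of ${\rm FTL}_{d,n}(q)$ in \cite{gojukola2}. Since ${\rm PTL}_n(q) = \mathcal{E}_n(q)/J$ with $J$ the two-sided ideal generated by $b_{1,2}$ (which, as recalled above, is principal), the trace $\rho$ factors through ${\rm PTL}_n(q)$ if and only if $\rho(J) = 0$; and because $\rho$ is a trace, this is equivalent to $\rho(x\,b_{1,2}) = 0$ for all $x \in \mathcal{E}_n(q)$, hence to $\rho(x\,b_{1,2}) = 0$ for $x$ running over the canonical basis $\mathfrak{B}_{\mathcal{E}_n(q)}$. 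First I would trim this list. The factor $\epsilon_1\epsilon_2$ of $b_{1,2}$ is the idempotent of the one-block partition of $\{1,2,3\}$, so it absorbs $\epsilon_I$ for every $I \in P(3)$; and any generator $b_j$ or $\epsilon_j$ with $j \ge 3$ appearing in a basis element can be transported to the far right of the word $x\,b_{1,2}$ by the braid relations, the commutations in \eqref{btpres}, and the mixed relations, and then stripped off by the Markov rules (iii)--(v), lowering the strand count. What survives are the evaluations $\rho(b_{1,2}\,w)$ with $w \in \{1, b_1, b_2, b_1 b_2, b_2 b_1, b_1 b_2 b_1\}$ inside $\mathcal{E}_3(q)$; moreover a short manipulation with $\epsilon_i \epsilon_j b_i = \epsilon_j b_i \epsilon_j$ and $\epsilon_i b_j b_i = b_j b_i \epsilon_j$ (for $|i-j|=1$) shows that $b_{1,2}\,b_i = q\,b_{1,2}$ and $\epsilon_i\,b_{1,2} = b_{1,2}$ in $\mathcal{E}_n(q)$, so every $\rho(b_{1,2}\,w)$ is a scalar multiple of $\rho(b_{1,2})$ and the \emph{single} equation $\rho(b_{1,2}) = 0$ is equivalent to $\rho(J) = 0$.

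Next I would compute $\rho(b_{1,2})$. Expanding by \eqref{ptldefrel}, each summand is $\epsilon_1\epsilon_2$ times a short braiding word; using $\epsilon_i b_i = b_i \epsilon_i$, $\epsilon_i^2 = \epsilon_i$, mixed identities such as $\epsilon_1\epsilon_2 b_1 = \epsilon_2 b_1 \epsilon_2$ and $\epsilon_2 b_1 b_2 = b_1 b_2 \epsilon_1$, and the quadratic relation $b_i^2 = 1 + (q-q^{-1})\epsilon_i b_i$, one rewrites each summand so that the Markov rules (iii)--(v) collapse it strand by strand down to $\mathcal{E}_1(q) = \mathbb{C}(q)$. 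Each such evaluation comes out as a monomial in $z$ and $E$ with a Laurent-polynomial coefficient in $q$ (for instance $\rho(\epsilon_1\epsilon_2) = E^2$). Assembling the six contributions with the coefficients $1, q, q, q^2, q^2, q^3$ of \eqref{ptldefrel} exhibits $\rho(b_{1,2})$ as a quadratic polynomial in $z$ with coefficients in $\mathbb{C}[q^{\pm 1}, E^{\pm 1}]$.

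It then remains to solve $\rho(b_{1,2}) = 0$. The quadratic factors over $\mathbb{C}[q^{\pm 1}, E^{\pm 1}]$ into two linear factors, and reading off its roots in $z$ gives exactly the two values in the statement; this settles the ``only if'' direction. For ``if'', substituting each of the two values back makes $\rho(b_{1,2})$ vanish, hence (by the first paragraph) $\rho(J) = 0$, so $\rho$ descends to ${\rm PTL}_n(q)$. As a consistency check, the first value $z = -q^{-1}E/(q^2+1)$ is precisely the specialization noted after Theorem~\ref{ftlthmgen} that produces the invariant $\theta_d$ from ${\rm FTL}_{d,n}(q)$, with $E_D$ in the role of $E$. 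An alternative derivation transports the whole computation through the isomorphism $\mathcal{E}_n(q) \cong {\rm Y}_{d,n}^{(\rm br)}(q)$ of Proposition~\ref{isoprop}, treating $E_D$ as a formal parameter; but the direct computation in $\mathcal{E}_3(q)$ is probably cleaner, since once $E$ is generic the E-system/Fourier input of Theorem~\ref{ftlthmgen} is unavailable.

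I expect the genuine obstacle to be the reduction in the first paragraph, i.e.\ verifying in full that $\rho(J) = 0$ is captured by the single scalar equation $\rho(b_{1,2}) = 0$: transporting $b_j, \epsilon_j$ with $j \ge 3$ past $b_{1,2}$ and establishing $b_{1,2}\,b_i = q\,b_{1,2}$ and $\epsilon_i\,b_{1,2} = b_{1,2}$ requires careful bookkeeping with the mixed relations $\epsilon_i\epsilon_j b_i = \epsilon_j b_i \epsilon_j$, $\epsilon_i b_j b_i = b_j b_i \epsilon_j$ for $|i-j| = 1$ and with the non-commutation of $\epsilon_i$ with $b_{i\pm 1}$. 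Once that structural point is secured, the individual trace evaluations, the factorization of the quadratic, and the back-substitution for the converse are all routine.
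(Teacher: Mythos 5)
Your proposal is correct and takes essentially the same route as the paper: reduce $\rho(J)=0$ via the trace property to the vanishing of $\rho$ on $\mathfrak{m}\,b_{1,2}$ for $\mathfrak{m}$ in the basis of $\mathcal{E}_3(q)$, show that each such product is $q^k$ times $b_{1,2}$ itself (your identities $b_{1,2}\,b_i=q\,b_{1,2}$ and $\epsilon_I\,b_{1,2}=b_{1,2}$ are precisely the content of Lemma~\ref{ptllem}, stated there for left multiplication), and then solve the single equation $\rho(b_{1,2})=0$, which factors as $\bigl((q^2+1)qz+E\bigr)(qz+E)=0$.
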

In order to prove Proposition~\ref{ptlthm} we shall need the following lemma.
\begin{lemma}\label{ptllem}
The following hold for the element $b_{1,2}$ in $\mathcal{E}_n(q)$:
\[
\mathfrak{m}\, b_{1,2} = q^k b_{1,2},
\]
where $\mathfrak{m} \in \mathfrak{B}_{\mathcal{E}_3(q)}$ and $k$ is the sum of the exponents of $b_i$ in the expression of $\mathfrak{m}$. An analogous result holds for the elements $b_{1,2} \, \mathfrak{m}$, where $\mathfrak{m} \in \mathfrak{B}_{\mathcal{E}_3(q)}$.
\end{lemma}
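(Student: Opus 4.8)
The plan is to reduce the statement to the single identity $b_i\, b_{1,2} = q\, b_{1,2}$ for $i\in\{1,2\}$ (together with its mirror $b_{1,2}\, b_i = q\, b_{1,2}$), and then to obtain the general basis element by iteration. Everything takes place in the subalgebra of $\mathcal{E}_n(q)$ generated by $b_1,b_2,\epsilon_1,\epsilon_2$.

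First I would record the facts about the idempotent part. Recall that the $\epsilon_i$ generate a commutative subalgebra in which $\epsilon_I\,\epsilon_J = \epsilon_{I\vee J}$; since $\epsilon_1\epsilon_2$ is the idempotent of the top partition $\{\{1,2,3\}\}$ of $\mathbf{3}$, one has $\epsilon_I\,\epsilon_1\epsilon_2 = \epsilon_1\epsilon_2 = \epsilon_1\epsilon_2\,\epsilon_I$ for every $I\in P(3)$, and in particular $\epsilon_1\epsilon_2\,\epsilon_i = \epsilon_1\epsilon_2$ for $i=1,2$. Moreover the relation $\epsilon_i\epsilon_j b_i = b_i\epsilon_i\epsilon_j$ for $|i-j|=1$ in \eqref{btpres}, together with commutativity of the $\epsilon$'s, shows that $\epsilon_1\epsilon_2$ commutes with both $b_1$ and $b_2$; hence, writing
\[
C := 1 + q(b_1+b_2) + q^2(b_1b_2+b_2b_1) + q^3 b_1b_2b_1 = \sum_{w\in S_3} q^{\ell(w)}\, b_w
\]
(where $b_w = b_{i_1}\cdots b_{i_\ell}$ for a reduced word $w=s_{i_1}\cdots s_{i_\ell}$, well defined on $S_3$ by the braid relation), we have $b_{1,2} = \epsilon_1\epsilon_2\, C = C\,\epsilon_1\epsilon_2$.

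The crux is that, once multiplied by $\epsilon_1\epsilon_2$, the element $C$ behaves like the $q$-symmetrizer of the Hecke algebra of $S_3$. Indeed, from $b_i^2 = 1 + (q-q^{-1})\epsilon_i b_i$ and $\epsilon_1\epsilon_2\epsilon_i = \epsilon_1\epsilon_2$ one gets $\epsilon_1\epsilon_2\, b_i^2 = \epsilon_1\epsilon_2\,\bigl(1 + (q-q^{-1})\, b_i\bigr)$, i.e.\ the $b_i$ satisfy the plain Hecke quadratic relation inside the left ideal $\epsilon_1\epsilon_2\,\mathcal{E}_n(q)$. I would then run the standard symmetrizer computation: pair the six elements of $S_3$ into $\{w,\,s_iw\}$ with $\ell(s_iw)=\ell(w)+1$, use $b_i b_w = b_{s_iw}$ and $\epsilon_1\epsilon_2\, b_i b_{s_iw} = \epsilon_1\epsilon_2\,\bigl(b_w + (q-q^{-1})\, b_{s_iw}\bigr)$, and collect terms; the coefficient of $\epsilon_1\epsilon_2\, b_{s_iw}$ becomes $q^{\ell(w)} + q^{\ell(w)+1}(q-q^{-1}) = q^{\ell(w)+2}$, which is exactly what forces $\epsilon_1\epsilon_2\, b_i C = q\,\epsilon_1\epsilon_2\, C$. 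Using $b_i\,\epsilon_1\epsilon_2 = \epsilon_1\epsilon_2\, b_i$ this gives $b_i\, b_{1,2} = q\, b_{1,2}$ for $i=1,2$, and the mirror-symmetric calculation (starting from $b_{1,2}=C\,\epsilon_1\epsilon_2$ and right-multiplying) gives $b_{1,2}\, b_i = q\, b_{1,2}$.

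Finally I would assemble the lemma: every $\mathfrak{m}\in\mathfrak{B}_{\mathcal{E}_3(q)}$ has the form $\epsilon_I\, b_{j_1}\cdots b_{j_k}$ with each $j_\ell\in\{1,2\}$ and $k$ the sum of the $b$-exponents of $\mathfrak{m}$ (see \eqref{e3qbas}), so applying $b_{j_k}\, b_{1,2}=q\,b_{1,2}$ repeatedly peels the $b$'s off one at a time, yielding $\mathfrak{m}\, b_{1,2} = q^{k}\,\epsilon_I\, b_{1,2}$, and then $\epsilon_I\, b_{1,2} = \epsilon_I\,\epsilon_1\epsilon_2\, C = \epsilon_1\epsilon_2\, C = b_{1,2}$; the statement for $b_{1,2}\,\mathfrak{m}$ follows identically from $b_{1,2}\, b_i = q\, b_{1,2}$ and $b_{1,2}\,\epsilon_I = b_{1,2}$. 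There is no deep obstacle here: the only point requiring care is that the symmetrizer identity must be carried out inside the ideal $\epsilon_1\epsilon_2\,\mathcal{E}_n(q)$ (so one cannot simply quote the Hecke-algebra fact verbatim), and that the structural identities for the $\epsilon$'s be checked from the presentation \eqref{btpres}; the computational core is short and uses only the quadratic relation.
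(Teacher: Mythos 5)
Your proof is correct, and it takes a genuinely different route from the paper's. The paper treats the lemma as a ``straightforward computation'': it expands $\mathfrak{m}\,b_{1,2}$ explicitly for a representative basis element ($\mathfrak{m}=\epsilon_{1,3}b_1b_2b_1$), pushes the $\epsilon$'s to the left, applies the quadratic relation repeatedly, and collects coefficients term by term until the factor $q^3$ appears; the remaining $29$ basis elements are left to analogous case-by-case expansions. You instead isolate the single eigenvector identity $b_i\,b_{1,2}=q\,b_{1,2}$ ($i=1,2$) and derive it by the standard $q$-symmetrizer pairing argument, after observing that inside the ideal $\epsilon_1\epsilon_2\,\mathcal{E}_n(q)$ the generators satisfy the plain Hecke quadratic relation (because $\epsilon_1\epsilon_2\epsilon_i=\epsilon_1\epsilon_2$) and that $\epsilon_1\epsilon_2$ is central among $b_1,b_2$ by \eqref{btpres}; the general statement then follows by peeling off the $b$'s of $\mathfrak{m}$ one at a time and absorbing $\epsilon_I$ into $\epsilon_1\epsilon_2$. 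Your computations check out: the pairing $\{w,s_iw\}$ gives coefficients $q^{\ell(w)+1}$ and $q^{\ell(w)}+q^{\ell(w)+1}(q-q^{-1})=q^{\ell(w)+2}$ as claimed, and the absorption $\epsilon_{1,3}\epsilon_1\epsilon_2=\epsilon_1\epsilon_2$ (the only instance of $\epsilon_I\epsilon_1\epsilon_2=\epsilon_1\epsilon_2$ that is not immediate from idempotency and commutativity) follows from $\epsilon_{1,3}=b_1\epsilon_2b_1^{-1}$ and $\epsilon_1\epsilon_2b_1=b_1\epsilon_1\epsilon_2$ --- it would be worth writing out that one line rather than appealing to the general lattice identity $\epsilon_I\epsilon_J=\epsilon_{I\vee J}$. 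What your approach buys is uniformity and an explanation: it proves the claim for all of $\mathfrak{B}_{\mathcal{E}_3(q)}$ at once, identifies $b_{1,2}$ as a common eigenvector of left and right multiplication by the generators, and makes the exponent $k$ in $q^k$ transparent; the paper's direct expansion is more elementary but, strictly speaking, only exhibits one case.
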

\begin{proof}
The proof is a straightforward computation using relations \eqref{btpres}, and the definition of the elements $\epsilon_{i,j}$. We shall demonstrate here the proof for the case where $\mathfrak{m} = \epsilon_{1,3} b_1 b_2 b_1$. We have that:
\begin{align*}
\epsilon_{1,3} b_1 b_2 b_1 \, b_{1,2} &=  \epsilon_{1,3} b_1 b_2 b_1 \Big (\epsilon_1 \epsilon_2 \left (1 + q ( b_1 + b_2) + q^2( b_1 b_2 + b_2 b_1 ) + q^3 b_1 b_2 b_1 \right ) \Big)\\
& = \epsilon_{1,3} \epsilon_1 \epsilon_2 \, b_1 b_2 b_1 \left (1 + q ( b_1 + b_2) + q^2( b_1 b_2 + b_2 b_1 ) + q^3 b_1 b_2 b_1 \right ) \\
&= b_1  \epsilon_2 b_1^{-1}  \epsilon_1 \epsilon_2 \, b_1 b_2 b_1 \left (1 + q ( b_1 + b_2) + q^2( b_1 b_2 + b_2 b_1 ) + q^3 b_1 b_2 b_1 \right ) \\
 &=b_1   \epsilon_1 \epsilon_2 \, b_2 b_1 \left (1 + q ( b_1 + b_2) + q^2( b_1 b_2 + b_2 b_1 ) + q^3 b_1 b_2 b_1 \right ) \\
&=   \epsilon_1 \epsilon_2 \, b_1 b_2 b_1 \left (1 + q ( b_1 + b_2) + q^2( b_1 b_2 + b_2 b_1 ) + q^3 b_1 b_2 b_1 \right ) \\
&=  q^3 \epsilon_1 \epsilon_2 + \epsilon_1 \epsilon_2 ( q^2 + q^3 (q-q^{-1}))  (b_1 +  b_2)\\
&\quad + \epsilon_1 \epsilon_2(q +2 q^2(q-q^{-1}) + q^3(q-q^{-1})^2)   (b_1 b_2+  b_2b_1) \\
&\quad +  \epsilon_1 \epsilon_2  (1 + 2q(q-q^{-1}) + 2q^2(q-q^{-1})^2 + q^3 (q-q^{-1}) + q^3(q-q^{-1})^3) b_1  b_2 b_1\\
&=q^3 \epsilon_1 \epsilon_2 \left (1 + q ( b_1 + b_2) + q^2( b_1 b_2 + b_2 b_1 ) + q^3 b_1 b_2 b_1 \right ) = q^3 b_{1,2}.
\end{align*}

\end{proof}
\begin{proof}[Proof of Proposition~\ref{ptlthm}]
The trace $\rho$ factors through to the quotient algebra ${\rm PTL}_n(q)$ if and only if $\rho$ annihilates the defining ideal of ${\rm PTL}_n(q)$, that is if and only if $\rho(\mathfrak{m}\, b_{1,2})=0$, for $\mathfrak{m} \in \mathfrak{B}_{\mathcal{E}_n(q)}$. From the defining rules of $\rho$ one deduces that it suffices to show that $\rho$ annihilates the expressions $\mathfrak{m} \, b_{1,2}$, for $\mathfrak{m} \in \mathfrak{B}_{\mathcal{E}_3(q)}$. Using now Lemma~\ref{ptllem} we have for $\mathfrak{m} \in \mathfrak{B}_{\mathcal{E}_3(q)}$ that:
\[
\rho(\mathfrak{m}\, b_{1,2}) =0 \Leftrightarrow q^k \rho (b_{1,2}) = 0 \Leftrightarrow \rho(b_{1,2})=0.
\]
Expanding the term $b_{1,2}$ we obtain:
\begin{equation}\label{zvalptl}
\rho \left(\epsilon_1\epsilon_2 (1 + q( b_1 +b_2) + q^2 (b_1b_2 + b_2 b_1) + q^3 b_1 b_2 b_1 )\right) =0 .
\end{equation}
By the linearity of the trace $\rho$ we have that \eqref{zvalptl} is equivalent to:
\[
\left ( (q^2+1 ) qz + E \right)  \left( qz + E \right ) = 0,
\]
 which leads to the following values for $z$:
 \[
 z = -\frac{q^{-1}E}{q^2+1} \quad \mbox{or} \quad z=-\frac{q^{-1}}{E}.
 \]
\end{proof}

\begin{remark} \rm
In \cite{19} Juyumaya gave the necessary and sufficient conditions so that the Markov trace on the algebra $\mathcal{E}_n(u)$ factors through to the quotient algebra ${\rm PTL}_{n}(u)$. At that time the existence of $\rho$ wasn't proved yet (this happened later in \cite{2}), so Juyumaya conjectured its existence and used indeterminates $A$ and $B$ for the trace parameters that correspond to the elements $\widetilde b_i$ and $\widetilde \epsilon_i$ respectively (recall Remark~\ref{btswitch}). Our Proposition~\ref{ptlthm} is the analogous result for the now known trace $\rho$ in terms of the new presentation with parameter $q$.
\end{remark}

 We shall now construct a new 2-variable invariant for tied links. 
\begin{definition}\rm
Let $z=-\frac{q^{-1}E}{q^2+1}$. Substituting this value for $z$ in \eqref{lambdaval} we obtain $\lambda=q^4$ and from $\overline{\Theta}(q,\lambda, E)$ the following 2-variable invariant of {\it tied links}:
\[
\overline{\theta}(q,E) (\alpha)= \left(- \frac{q^2+1}{q\,E} \right )^{n-1} q^{2\varepsilon(\alpha)} \rho(\pi(\alpha)),
\]
where $\alpha \in TB_n$ and $\varepsilon(\alpha)$ as in \eqref{ovthet}. In analogy to \eqref{thetacl}, by restricting to classical braids we obtain a 2-variable invariant for classical links denoted by $\theta(q,E)$, which is clearly a specialization of the invariant $\Theta(q,\lambda,E)$ for $\lambda=q^4$.
 
\end{definition}
The value $z=-\frac{q^{-1}}{E}$ is discarded since basic pairs of non-isotopic links are not distinguished. 

 \subsection{\it $\theta$ as a generalization of the invariants $\theta_d$.} Let now ${\rm FTL}_{d,n}^{(\rm br)}(q)$ denote the quotient of the algebra ${\rm Y}_{d,n}^{(\rm br)}(q)$ over the two-sided ideal ${\langle e_1 e_2 g_{1,2} \rangle}$, namely:
\[
{\rm FTL}_{d,n}^{(\rm br)}(q) = \frac{ {\rm Y}_{d,n}^{(\rm br)}(q)}{{\langle e_1 e_2 g_{1,2} \rangle}}.
\] 
Note that the elements $e_i \in {\rm Y}_{d,n}^{(\rm br)} (q)$, for all $i=1, \ldots, n-1$, and thus the quotient algebra ${\rm FTL}_{d,n}^{(\rm br)}(q)$ is well-defined and it is generated only by the braiding generators $g_i$, where $i=1, \ldots ,n-1$. From the discussion in Section~\ref{brsec} we have that  ${\rm FTL}_{d,n}^{(\rm br)}(q)$  coincides with the subalgebra of ${\rm FTL}_{d,n}(q)$ that is generated by the $g_i$'s and the idempotents $e_i$, where $i=1, \ldots , n-1$. From Proposition~\ref{isoprop} now one has the following:

\begin{proposition}[{\cite[Proposition~7.4]{15}}]\label{ftlisoprop} For $d \geq n$, the Partition Temperley--Lieb algebra ${\rm PTL}_{d,n}(q)$  is isomorphic to the algebra ${\rm FTL}_{d,n}^{(br)}(q)$.
\end{proposition}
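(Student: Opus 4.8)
The plan is to deduce this proposition directly from Proposition~\ref{isoprop} by observing that both algebras are quotients of isomorphic algebras by corresponding ideals, so that the isomorphism of the ambient algebras descends to the quotients. Recall from Proposition~\ref{isoprop} that for $d \geq n$ the map $\phi : \mathcal{E}_n(q) \longrightarrow {\rm Y}_{d,n}^{({\rm br})}(q)$ sending $b_i \mapsto g_i$ and $\epsilon_i \mapsto e_i$ is an isomorphism. The algebra ${\rm PTL}_{d,n}(q)$ is by definition the quotient of $\mathcal{E}_n(q)$ by the two-sided ideal generated by $b_{1,2} = \epsilon_1 \epsilon_2 (1 + q(b_1 + b_2) + q^2(b_1 b_2 + b_2 b_1) + q^3 b_1 b_2 b_1)$, while ${\rm FTL}_{d,n}^{({\rm br})}(q)$ is the quotient of ${\rm Y}_{d,n}^{({\rm br})}(q)$ by the ideal $\langle e_1 e_2 g_{1,2} \rangle$, where (from \eqref{newftl}) $e_1 e_2 g_{1,2} = e_1 e_2 (1 + q(g_1 + g_2) + q^2(g_1 g_2 + g_2 g_1) + q^3 g_1 g_2 g_1)$.

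The key observation is that $\phi$ carries the generator $b_{1,2}$ of the defining ideal of ${\rm PTL}_{d,n}(q)$ precisely to the generator $e_1 e_2 g_{1,2}$ of the defining ideal of ${\rm FTL}_{d,n}^{({\rm br})}(q)$: applying $\phi$ term by term replaces each $\epsilon_i$ by $e_i$ and each $b_i$ by $g_i$, so $\phi(b_{1,2}) = e_1 e_2 (1 + q(g_1 + g_2) + q^2(g_1 g_2 + g_2 g_1) + q^3 g_1 g_2 g_1) = e_1 e_2 g_{1,2}$. Since $\phi$ is an algebra isomorphism, it maps the two-sided ideal $\langle b_{1,2} \rangle$ of $\mathcal{E}_n(q)$ onto the two-sided ideal $\langle e_1 e_2 g_{1,2} \rangle$ of ${\rm Y}_{d,n}^{({\rm br})}(q)$. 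Here one should note, as recorded after \eqref{ptldefrel}, that the defining ideal of ${\rm PTL}_{d,n}(q)$, although a priori generated by all the $b_{i,i+1}$, is in fact principal and generated by $b_{1,2}$ alone \cite{juptl}; the analogous statement for ${\rm FTL}_{d,n}^{({\rm br})}(q)$ is built into its definition above. Therefore $\phi$ induces a well-defined isomorphism on the quotients,
\[
\overline{\phi} : {\rm PTL}_{d,n}(q) = \frac{\mathcal{E}_n(q)}{\langle b_{1,2} \rangle} \;\xrightarrow{\ \sim\ }\; \frac{{\rm Y}_{d,n}^{({\rm br})}(q)}{\langle e_1 e_2 g_{1,2} \rangle} = {\rm FTL}_{d,n}^{({\rm br})}(q),
\]
which is the desired conclusion; it sends the class of $b_i$ to the class of $g_i$ and the class of $\epsilon_i$ to the class of $e_i$.

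There is essentially no serious obstacle here once Proposition~\ref{isoprop} is in hand; the only point requiring a little care is the bookkeeping that $\phi$ really does send the full defining ideal of ${\rm PTL}_{d,n}(q)$ onto the full defining ideal of ${\rm FTL}_{d,n}^{({\rm br})}(q)$ and not merely a sub- or super-ideal. This is where the principality of both ideals (generated respectively by $b_{1,2}$ and by $e_1 e_2 g_{1,2}$) is used: an algebra isomorphism maps the two-sided ideal generated by an element $x$ onto the two-sided ideal generated by $\phi(x)$, so it suffices to match the single generators, which we did above. One should also remark that the condition $d \geq n$ is inherited verbatim from Proposition~\ref{isoprop}, since it is exactly the range in which $\phi$ is an isomorphism rather than merely a surjection. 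This makes the proof a short formal argument, and I would present it in precisely the three steps above: recall $\phi$, check $\phi(b_{1,2}) = e_1 e_2 g_{1,2}$ and hence that $\phi$ matches the two (principal) defining ideals, and pass to the quotient.
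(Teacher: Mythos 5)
Your proof is correct and follows exactly the route the paper intends: Proposition~\ref{ftlisoprop} is stated there as an immediate consequence of Proposition~\ref{isoprop}, obtained by checking that $\phi$ carries the (principal) defining ideal $\langle b_{1,2}\rangle$ of ${\rm PTL}_n(q)$ onto $\langle e_1 e_2 g_{1,2}\rangle$ and passing to quotients. Your added care about the principality of both ideals is a sensible piece of bookkeeping that the paper leaves implicit.
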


This leads to the following result  that provides the connection between the invariants $\theta_d$  and $\theta(q,E)$.

\begin{lemma}\label{trlem} For $d\geq n$, the Markov traces ${\rm tr}_{d,D}$ on ${\rm FTL}_{d,n}^{(\rm br)}(q)$ and $\rho$  on ${\rm PTL}_n(q)$ coincide when restricted to classical braids.
\end{lemma}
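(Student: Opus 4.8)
The plan is to leverage Proposition~\ref{ftlisoprop}, which already identifies ${\rm PTL}_{d,n}(q)$ with ${\rm FTL}_{d,n}^{(\rm br)}(q)$ for $d\geq n$ via the map $\phi$ sending $b_i\mapsto g_i$ and $\epsilon_i\mapsto e_i$, and then to show that the two trace functions are intertwined by this isomorphism when we restrict attention to the images of classical braids (that is, elements in the subalgebra generated only by the $g_i$'s, equivalently the $b_i$'s). So the first step is to set up precisely what ``coincide when restricted to classical braids'' means: both ${\rm tr}_{d,D}$ and $\rho$ are defined on the respective towers of algebras, and under $\phi$ a classical braid word $\alpha\in B_n$ maps to the same element $\delta(\alpha)$ (resp. $\overline\pi(\alpha)$) on each side; I must verify $\rho(\overline\pi(\alpha)) = {\rm tr}_{d,D}(\delta(\alpha))$ for all $\alpha\in B_n$, with $E_D=1/d$ treated as the indeterminate $E$.

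The key mechanism is uniqueness of the Markov trace. Both $\rho$ and (the restriction of) ${\rm tr}_{d,D}$ satisfy Markov-type inductive rules on the respective towers; recall from Section~\ref{brsec} that on ${\rm Y}_{d,n}^{(\rm br)}(q)$ the last rule of ${\rm tr}_{d,D}$ can be replaced by the two rules ${\rm tr}_{d,D}(ae_n)=E_D\,{\rm tr}_{d,D}(a)$ and ${\rm tr}_{d,D}(ae_ng_n)=z\,{\rm tr}_{d,D}(a)$, which are exactly the images under $\phi$ of rules $(iv)$ and $(v)$ for $\rho$, while the Markov rule $(iii)$ matches ${\rm tr}_{d,D}(ag_n)=z\,{\rm tr}_{d,D}(a)$ and rules $(i),(ii)$ are the trace property and normalization. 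Hence $\rho\circ\phi^{-1}$ is a linear functional on ${\rm FTL}_{d,n}^{(\rm br)}(q)$ satisfying all the defining rules of the Markov trace there; since Proposition~\ref{isoprop} (and the discussion following it) guarantees that ${\rm tr}_{d,D}$ restricted to this subalgebra is \emph{the} such trace — it is well-defined precisely because it coincides with $\rho$ on $\mathcal{E}_n(q)$ — uniqueness forces $\rho\circ\phi^{-1}={\rm tr}_{d,D}$ on the whole subalgebra, and in particular on the image of $\mathbb{C}B_n$. Since $\phi(\overline\pi(\alpha))=\delta(\alpha)$ for $\alpha\in B_n$, this gives the claimed equality. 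An alternative, more hands-on route is to induct on $n$: use the canonical bases to write any element of ${\rm FTL}_{d,n}^{(\rm br)}(q)$ in terms of lower-index pieces together with a single top-level $g_{n-1}$ or $e_{n-1}$ or $e_{n-1}g_{n-1}$, apply the matching rules on both sides, and invoke the induction hypothesis; but the uniqueness argument is cleaner and I would present that.

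The main obstacle is a bookkeeping subtlety rather than a deep difficulty: one must be careful that the trace values genuinely agree as elements of the \emph{same} coefficient ring. On the $\rho$ side the target is $\mathbb{C}[q^{\pm1},z^{\pm1},E^{\pm1}]$ with $E$ a free indeterminate, whereas on the ${\rm tr}_{d,D}$ side $E_D=1/d$ is a concrete scalar; the point (made in the paragraph after Proposition~\ref{isoprop}) is that on the braiding subalgebra the only way $E_D$ enters is through rule $(iv)$, so one may legitimately keep it formal, and then the specialization $E\mapsto 1/d$ recovers the numerical case. I would therefore state the lemma's conclusion with this understanding and spend a sentence making the identification of coefficient rings explicit. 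The remaining care needed is simply to check that the value of $z$ used is consistent on both sides, namely $z=-q^{-1}E/(q^2+1)$ as fixed in the definition of $\overline\theta$ and in \eqref{onevarinv}, so that the normalizing factors in the eventual invariants also match; this is immediate from the formulas but worth recording.
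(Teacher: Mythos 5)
Your argument is correct and follows essentially the same route as the paper: the isomorphism of Proposition~\ref{ftlisoprop}, the coincidence of $\rho$ and ${\rm tr}_{d,D}$ on the braiding subalgebras with $E_D$ promoted to the indeterminate $E$ (Section~\ref{thetaclrem}), and the matching of the admissible values of $z$ for which the traces factor through the respective quotients (Proposition~\ref{ptlthm} and Theorem~\ref{ftlthmgen}). The only cosmetic difference is that you re-derive the upstairs coincidence from uniqueness of the Markov trace and the matching inductive rules, whereas the paper simply cites this fact.
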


\begin{proof}
We know from Section~\ref{thetaclrem} that the traces ${\rm tr}_{d,D}$ and $\rho$ coincide for the case of classical braids. Moreover, from Proposition~\ref{ftlisoprop} we have that ${\rm FTL}_{d,n}^{(\rm br)}(q) \cong {\rm PTL}_n(q)$, for $d\geq n$. On the other hand, the specialized trace ${\rm tr}_{d,D}$ factors through to the quotient algebra ${\rm FTL}_{d,n}(q)$ and subsequently to ${\rm FTL}_{d,n}^{(\rm br)}(q)$ for $z = -q^{-1} E_D/(q^2 +1)$ or $z = - q^{-1} E_D$. Since we are considering only classical links, the parameter $E_D$ can be generalized to an indeterminate and, thus, $z$ coincides with the values for which the Markov trace $\rho$ factors through to ${\rm PTL}_{d,n}(q)$, as proven in Proposition~\ref{ptlthm}. This means that the traces ${\rm tr}_{d,D}$ and $\rho$ coincide also on the level of the quotient algebras  
\end{proof}

We note here that the value $z = - q^{-1} E$ is discarded in both cases for the same topological reasons mentioned above. Thus, the following corollary is immediate:

\begin{corollary}\label{invscol}
For $d\geq n$, the invariant $\theta(q,E)$ specializes to the invariants $\theta_d$ for $E= \frac{1}{d}$.
\end{corollary}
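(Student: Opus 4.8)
The plan is to compare, term by term, the closed formula defining $\theta(q,E)$ on the closure of a classical braid with the formula \eqref{onevarinv} defining $\theta_d(q)$, after setting $E=1/d$. Since $\theta$ is obtained by restricting $\overline\theta$ to classical braids, for $\alpha\in B_n$ we have
\[
\theta(q,E)(\widehat\alpha)=\left(-\frac{q^2+1}{qE}\right)^{n-1}q^{2\varepsilon(\alpha)}\,\rho(\overline\pi(\alpha)),
\]
where $\overline\pi:\mathbb{C}(q)B_n\to\mathcal{E}_n(q)$ sends $\sigma_i\mapsto b_i$, and $\rho(\overline\pi(\alpha))$ is legitimately read as computed in the quotient ${\rm PTL}_n(q)$ by Proposition~\ref{ptlthm} for the chosen value $z=-q^{-1}E/(q^2+1)$. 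Comparing with \eqref{onevarinv}, once $E=E_D$ the normalization constant $\left(-\frac{q^2+1}{qE}\right)^{n-1}$ becomes $\left(-\frac{1+q^2}{qE_D}\right)^{n-1}$, and the writhe factor $q^{2\varepsilon(\alpha)}$ is literally the same; so the only thing left to prove is the equality of trace values.

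First I would fix $D=\mathbb{Z}/d\mathbb{Z}$, so that $E_D=1/d$, matching the specialization $E=1/d$. Next, for $d\geq n$, Proposition~\ref{ftlisoprop} gives an isomorphism ${\rm PTL}_n(q)\cong{\rm FTL}^{(\rm br)}_{d,n}(q)$ extending $b_i\mapsto g_i$, hence carrying $\overline\pi(\alpha)$ to $\delta(\alpha)$ for every classical braid $\alpha$. Then Lemma~\ref{trlem} says precisely that under this identification $\rho$ and ${\rm tr}_{d,D}$ agree on the images of classical braids, i.e. $\rho(\overline\pi(\alpha))={\rm tr}_{d,D}(\delta(\alpha))$ in the respective quotients. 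Substituting this together with $E=E_D=1/d$ into the displayed formula yields exactly the right-hand side of \eqref{onevarinv}, so $\theta(q,1/d)(\widehat\alpha)=\theta_d(q)(\widehat\alpha)$ for all classical links, which is the assertion.

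The only point requiring care is the compatibility of the two trace-factorizations: one must know that the value of $z$ for which $\rho$ descends to ${\rm PTL}_n(q)$ (Proposition~\ref{ptlthm}) coincides, after treating $E_D$ as an indeterminate, with the value for which ${\rm tr}_{d,D}$ descends to ${\rm FTL}_{d,n}(q)$ and hence to ${\rm FTL}^{(\rm br)}_{d,n}(q)$ (Theorem~\ref{ftlthmgen} and the subsequent discussion). This is exactly the content of Lemma~\ref{trlem}, so no new work is needed here; note that the hypothesis $d\geq n$ is essential only because it is what makes Proposition~\ref{isoprop}, and therefore Proposition~\ref{ftlisoprop} and Lemma~\ref{trlem}, applicable. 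Alternatively, one can bypass the trace comparison entirely: by Remark~\ref{thetcoinc}, $\Theta(q,\lambda,1/d)=\Theta_d(q,\lambda_d)$, and $\theta(q,1/d)$ is by definition the specialization of $\Theta(q,\lambda,1/d)$ at $\lambda=q^4$, while $\theta_d(q)=\Theta_d(q,q^4)$ by \eqref{onevarinv} with $\lambda_D=q^4$; setting $\lambda=\lambda_d=q^4$ in the identity of Remark~\ref{thetcoinc} gives the claim at once.
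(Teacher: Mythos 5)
Your argument is correct and follows the paper's own route: the paper derives this corollary as an immediate consequence of Lemma~\ref{trlem} (the coincidence of ${\rm tr}_{d,D}$ and $\rho$ on classical braids in the respective quotients, valid for $d\geq n$ via Proposition~\ref{ftlisoprop}), combined with the matching of the normalization factors in \eqref{onevarinv} once $E=E_D=1/d$, exactly as you spell out. Your alternative closing remark via Remark~\ref{thetcoinc} and the specialization $\lambda=q^4$ is also sound, but the main line of reasoning is the same as the paper's.
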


\begin{remark} \rm
From the above and following Section~\ref{thetaclrem}, for $d\geq n$, the invariant $\theta(q,E)$ can be constructed directly from ${\rm FTL}^{(\rm br)}_{d,n}(q)$.
\end{remark}

\subsection{\it $\theta$ via a skein relation}  Substituting in Theorem~\ref{btmainthm} the value  $\lambda =q^4$ corresponding to $z= q^{-1}E/(q^2+1)$ we obtain the invariant $\theta(q,E)$ as given in Theorem~\ref{mainthm}.

\begin{proof}[ Proof of Theorem~\ref{mainthm}] 
 The proof is immediate since the invariant $\theta(q,E)$ is a specialization of the 3-variable invariant $\Theta(q,\lambda,E)$ on the level of the quotient algebra ${\rm PTL}_n(q)$. Since the invariant $\Theta(q,\lambda,E)$ is well-defined \cite{4, 27}, so is $\theta(q,E)$.
\end{proof}

\begin{theorem}\label{thetstr}
The 2-variable classical link invariant $\theta(q,E)$ is stronger than the Jones polynomial.
\end{theorem}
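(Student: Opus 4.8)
The plan is to prove the two halves of ``stronger than'': first that $\theta$ determines the Jones polynomial $V$ (so $\theta$ is at least as powerful as $V$), and then that $\theta$ separates a pair of links on which $V$ agrees (so it is strictly more powerful). Both halves will be formal consequences of the specializations of $\theta$ already established, together with the earlier results on $\theta_d$.

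\textbf{Step 1: $\theta$ determines $V$.} Recall that by construction $\theta(q,E)=\Theta(q,q^4,E)$, i.e.\ $\theta$ is the specialization of the $3$-variable invariant $\Theta$ at $\lambda=q^4$. By the remark following \eqref{thetacl}, setting $E=1$ in $\overline\Theta$, and hence in its classical restriction $\Theta$, yields the Homflypt polynomial; therefore $\theta(q,1)=\Theta(q,q^4,1)=P(q,q^4)$. Substituting $\lambda=q^4$ into the Homflypt skein relation of Theorem~\ref{btmainthm}(1) produces the relation $q^{-2}\,\theta(L_+)-q^2\,\theta(L_-)=(q-q^{-1})\,\theta(L_0)$ holding now on \emph{all} crossings, and $\theta$ of the unknot is $1$; by the usual characterization of the Jones polynomial by its skein relation and value on the unknot, this forces $\theta(q,1)=V$ (in the variable $t=q^2$). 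Consequently, if $\theta(L)=\theta(L')$ as elements of $\mathbb{C}[q^{\pm1},E^{\pm1}]$, then applying the ring homomorphism $E\mapsto 1$ gives $V(L)=V(L')$; so $\theta$ is at least as strong as $V$.

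\textbf{Step 2: strictness.} By Remark~\ref{thetcoinc} we have $\Theta(q,\lambda,1/d)=\Theta_d(q,\lambda_d)$, and specializing $\lambda=q^4$ gives $\theta(q,1/d)=\theta_d(q)$ for every integer $d\geq1$ (this is also Corollary~\ref{invscol}). Now, for $d\geq2$ it was shown in \cite{gojukola2} that $\theta_d$ is not topologically equivalent to the Jones polynomial on links; since $\theta_d$ coincides with $V$ on knots and on disjoint unions of knots (Remark~\ref{remspec}), the discrepancy must occur on genuine links, so there is a pair of non-isotopic classical links $L_1,L_2$ and an integer $d_0\geq2$ with $V(L_1)=V(L_2)$ but $\theta_{d_0}(L_1)\neq\theta_{d_0}(L_2)$. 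Specializing $E\mapsto 1/d_0$ then shows $\theta(L_1)\neq\theta(L_2)$ while $V(L_1)=V(L_2)$. Combined with Step~1, this proves $\theta$ is strictly stronger than $V$.

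\textbf{Main obstacle.} The only delicate point is the passage in Step~2 from the abstract phrase ``$\theta_d$ is not topologically equivalent to $V$ on links'' to an explicit $V$-equivalent pair that $\theta_d$ separates; I would make this precise by citing the concrete example in \cite{gojukola2} rather than relying on the abstract wording. An alternative, fully self-contained route, which is the one developed later in the paper, avoids $\theta_d$ altogether: compute $\theta$ directly from the rules of Theorem~\ref{mainthm} on the Thistlethwaite link and on the $2$-component unlink, which share the (trivial) Jones polynomial, and check that the two values of $\theta$ differ. Either way, everything beyond this one step is immediate from the specializations $\theta(q,1)=V$ and $\theta(q,1/d)=\theta_d$ already in hand.
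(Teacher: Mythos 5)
Your proof is correct and shares the two-part skeleton of the paper's argument (the specialization $E=1$ recovers $V$, hence $\theta$ dominates $V$; plus a $V$-equivalent pair that $\theta$ separates), but you source the separating pair differently. The paper takes the six explicit pairs of non-isotopic, Homflypt-equivalent links from \cite{ChJuKaLa} that are distinguished by $\Theta$, specializes $\lambda=q^4$, and writes out the six nonzero differences $\theta(L)-\theta(L')$, observing that each collapses to zero at $E=1$ --- which simultaneously certifies that each pair is $V$-equivalent and that a single pair witnesses strictness for every $E\neq 1$. You instead invoke the non-equivalence of $\theta_d$ ($d\geq 2$) with $V$ on links from \cite{gojukola2} and pull the pair back through the specialization $E=1/d_0$. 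That route is legitimate and shorter, but, as you yourself flag, the phrase ``not topologically equivalent on links'' does not by itself supply the needed direction: your observation that $\theta_d$ agrees with $V$ on knots and disjoint unions of knots only locates the discrepancy on genuine links, it does not rule out that the discrepancy runs the wrong way (a pair separated by $V$ but not by $\theta_{d_0}$). So the concrete $V$-equivalent example from \cite{gojukola2} must indeed be cited to close Step~2; with that citation your argument is complete. Your proposed self-contained alternative via the Thistlethwaite link versus the two-component unlink is essentially what the paper carries out later, in its final subsection, rather than in the proof of this theorem.
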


\begin{proof}
As mentioned earlier, in \cite{4} six pairs of non-isotopic oriented classical links with the same Homflypt polynomial were found to be distinguished by the invariants $\Theta_d(q,\lambda_D)$ and $\Theta(q,\lambda, E)$. By specializing the indeterminate $\lambda = q^4$ we find that they are all still distinguished by $\theta$, namely:

{\small\begin{align*}
&\theta(L11n358\{0,1\})-\theta(L11n418\{0,0\}) = \frac{(1-E) (q-1)^5 (q+1)^5 (q^2+1)(q^2+q+1)(q^2-q+1)}{E\, q^{18}}
\\
&\theta(L11a467\{0,1\})-\theta(L11a527\{0,0\}) =  \frac{(1-E) (q-1)^5 (q+1)^5 (q^2+1)(q^2+q+1)(q^2-q+1)}{E\, q^{18}}
\\
&\theta(L11n325\{1,1\})-\theta(L11n424\{0,0\}) = \frac{(E-1) (q-1)^5 (q+1)^5 (q^2+1)(q^2+q+1)(q^2-q+1)}{E\, q^{14}}\\
&\theta(L10n79\{1,1\})-\theta(L10n95\{1,0\}) = \frac{(E-1) (q^2-1)^3 (q^8+2\,q^6+2\,q^4-1)}{E \, q^{18}}\\
&\theta(L11a404\{1,1\})-\theta(L11a428\{0,1\}) = \frac{(1-E) (q-1)^3(q+1)^3(q^2+1)(q^4+1)(q^6-q^4+1)}{E\, q^4}
\\  
&\theta(L10n76\{1,1\})-\theta(L11n425\{1,0\}) =  \frac{(E-1) (q-1)^3(q+1)^3(q^2+1)(q^4+1)}{E\, q^{10}}.\\
\end{align*}}
For $E=1$ the invariant $\theta(q,E)$ coincides with the Jones polynomial and the above six differences collapse to zero. Since the invariant $\theta(q,E)$ includes the family of invariants $\{ \theta_d \}$ as well as the Jones polynomial, we deduce the claim. \end{proof}

\begin{remark} \rm 
It is clear from the proof of Theorem~\ref{thetstr} and from the results in \cite{4} about $\Theta$ that the invariant $\theta (q,E)$ is not topologically equivalent to the Homflypt polynomial on links. Further, by Remark~\ref{remspec}, the invariant $\theta (q,E)$ is also not topologically equivalent to the Kauffman polynomial.

\end{remark}

\begin{remark}\label{ambremv} \rm
Following Remark~\ref{diagrem} one can derive the invariant $\theta(q,E)$ from the invariant $V\left [  V \right ]$. In this case, we have that $\lambda = q^4$, $a = q^{-2}$ and $\zeta = q -q^{-1}$ and  thus $V\left [ V \right ]$ coincides with the invariant $\theta(q,E)$. 
\end{remark}

\subsection{{\it A combinatorial formula for $\theta$}}

In this section we  present a closed formula for the invariant $\theta$ that involves  the Jones polynomials of sublinks of a given link. This formula is a specialization of the one that W.B.R. Lickorish proves in \cite{4} for the invariant $\Theta(q,\lambda,E)$. Namely:

\begin{theorem}[{\cite[Appendix~B]{4}}]\label{licktheta}
Let  $L$ be an oriented link with $n$ components, then:

\begin{equation}\label{clform}
\Theta(q,\lambda,E)(L) = \sum_{k=1}^n \mu^{k-1} E_k \sum_{\pi} \lambda^{\nu(\pi)} P(\pi L), 
\end{equation}
where the second summation is over all partitions  $\pi$ of the components of $L$ into $k$ (unordered) subsets and $P(\pi L)$ denotes the product of the Homflypt polynomials of the $k$ sublinks of $L$ defined by $\pi$. Furthermore, $\nu(\pi)$ is the sum of all linking numbers of pairs of components of $L$ that are distinct sets of $\pi$, $E_k = (E^{-1} -1 ) (E^{-1} -2 ) \ldots (E^{-1} - k+1)$, with $E_1=1$ and $\mu = \frac{\lambda^{-1/2} - \lambda^{1/2}}{q-q^{-1}}$.
\end{theorem}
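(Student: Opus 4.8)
The plan is to deduce the formula from the uniqueness statement of Theorem~\ref{btmainthm}: since $\Theta$ is \emph{characterised} by the mixed skein relation~(1) together with the disjoint-union rule~(2), it suffices to prove that the right-hand side of \eqref{clform}, which I denote $G(L)$, obeys these same two rules; uniqueness then forces $G=\Theta$. Throughout I use that the Homflypt polynomial itself satisfies $\lambda^{-1/2}P(L_+)-\lambda^{1/2}P(L_-)=(q-q^{-1})P(L_0)$ at \emph{every} crossing, and that $\nu$ and the weights $\mu^{k-1}E_k$ depend only on the combinatorial data of a partition.

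For the skein rule, fix a Conway triple $L_+,L_-,L_0$ at a crossing between two distinct components $K_a,K_b$ of $L_\pm$; in $L_0$ these merge into a single component $K_{ab}$, so $L_0$ has one component fewer. I would split the sum over partitions $\pi$ of the components of $L_\pm$ into those in which $a$ and $b$ lie in the \emph{same} block and those in which they lie in \emph{different} blocks. For a partition of the first kind the crossing sits inside the single sublink $S$ determined by the block of $a$ and $b$; applying the Homflypt skein relation to $S$ (the other blocks being untouched), the $\pi$-term of $\lambda^{-1/2}G(L_+)-\lambda^{1/2}G(L_-)$ becomes $(q-q^{-1})$ times the $\pi'$-term of $G(L_0)$, where $\pi'$ is the partition of $L_0$ carrying $K_{ab}$ in that block. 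One checks that $\pi$ and $\pi'$ have equally many blocks, hence the same weight, and that $\nu(\pi)=\nu(\pi')$: the crossing change only alters $\mathrm{lk}(K_a,K_b)$, which is internal to one block and so invisible to $\nu$, while $\mathrm{lk}(K_{ab},K_m)=\mathrm{lk}(K_a,K_m)+\mathrm{lk}(K_b,K_m)$ preserves every other contribution. Since $\pi\mapsto\pi'$ is a bijection onto the partitions of $L_0$, the first-kind terms sum to exactly $(q-q^{-1})G(L_0)$. For a partition of the second kind, the crossing lies between two \emph{distinct} sublinks, so each factor $P(\text{sublink})$ is unchanged by the crossing change, whereas $\mathrm{lk}(K_a,K_b)$ — now counted by $\nu(\pi)$ — drops by one from $L_+$ to $L_-$; the $\pi$-contribution is therefore $\lambda^{\nu(\pi)}P(\pi L)\bigl(\lambda^{-1/2}-\lambda^{1/2}\lambda^{-1}\bigr)=0$. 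Summing, $G$ satisfies rule~(1).

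For the disjoint-union rule, take $\mathcal{K}=\bigsqcup_{i=1}^{r}K_i$ with each $K_i$ a knot. All linking numbers vanish, so every $\lambda^{\nu(\pi)}=1$, and for a partition $\pi$ into $k$ blocks the factor $P(\pi\mathcal{K})$ is a product of Homflypt polynomials of split unions of knots whose loop-normalisation contributes $\mu^{\,r-k}\prod_i P(K_i)$; combined with the $\mu^{k-1}$ in \eqref{clform} this yields a $k$-independent factor $\mu^{\,r-1}$, so, since there are $S(r,k)$ partitions into $k$ blocks, $G(\mathcal{K})=\mu^{\,r-1}\bigl(\prod_i P(K_i)\bigr)\sum_{k=1}^{r}S(r,k)\,E_k$. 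The proof then reduces to the combinatorial identity $\sum_{k=1}^{r}S(r,k)\,E_k=E^{1-r}$, which is the classical expansion $x^{r}=\sum_k S(r,k)\,x(x-1)\cdots(x-k+1)$ at $x=E^{-1}$ once one notes $E^{-1}(E^{-1}-1)\cdots(E^{-1}-k+1)=E^{-1}E_k$; this gives the value of $\Theta$ on $\mathcal{K}$ prescribed by rule~(2). The step I expect to be the main obstacle is keeping all of these constants honest — reconciling the precise Homflypt normalisation used on the sublinks with the $\mu^{k-1}$ weights so that \emph{both} rules come out with exactly the stated factors, and carrying through the partition-correspondence and linking-number bookkeeping of the skein step with no sign or index slips; once this is settled, the theorem follows from the uniqueness in Theorem~\ref{btmainthm}.
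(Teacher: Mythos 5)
Your proposal is correct and is essentially the proof that Lickorish gives in the cited Appendix~B (the present paper only quotes the result without reproving it): one verifies that the right-hand side of \eqref{clform} satisfies both characterizing rules of Theorem~\ref{btmainthm} --- splitting the partition sum at a mixed crossing according to whether the two components lie in the same block, with the same-block terms reproducing $(q-q^{-1})$ times the $L_0$-sum and the different-block terms cancelling via the shift of $\nu$ by one --- and then invokes uniqueness. The only caveat is the normalization issue you flagged yourself: your disjoint-union computation correctly yields $\mu^{r-1}E^{1-r}\prod_i P(K_i)=E^{1-r}P(\mathcal{K})$, which is the actual value of $\Theta$ on a split union of $r$ knots (consistent with \eqref{thetacl}, e.g.\ $\Theta(\bigcirc\sqcup\bigcirc)=\mu E^{-1}$); the restatement of rule~(2) in Theorem~\ref{btmainthm} with $\prod_i P(K_i)$ in place of $P(\mathcal{K})$ drops the factor $\mu^{r-1}$, so the discrepancy lies in that restatement rather than in your argument.
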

We note that Theorem~\ref{licktheta} has been also proved independently in \cite{8} using tools from representation theory. The formula for the invariant $\theta$ is an easy corollary of Theorem~\ref{licktheta}. Indeed we have:

\begin{corollary}\label{combformula}
Let  $L$ be an oriented link with $n$ components. Then:
\[
\theta(q,E)(L) = \sum_{k=1}^n (-1)^{k-1} (q+q^{-1})^{k-1} E_k \sum_{\pi} q^{4\nu(\pi)} V(\pi L), 
\]
where $\pi$, $\nu(\pi)$, and $E_k$ are as in Theorem~\ref{licktheta}, and $V(\pi L)$ denotes the product of the Jones polynomials of the $k$ sublinks of $L$ defined by $\pi$. 
\end{corollary}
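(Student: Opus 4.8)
The plan is to obtain the formula by direct specialisation of Lickorish's formula \eqref{clform} of Theorem~\ref{licktheta} at $\lambda = q^4$, using the fact established above that $\theta(q,E)$ is exactly the invariant $\Theta(q,\lambda,E)$ evaluated at $\lambda = q^4$ (equivalently, at the trace value $z = -\tfrac{q^{-1}E}{q^2+1}$). So the whole argument reduces to tracking how each $\lambda$-dependent ingredient of \eqref{clform} transforms under $\lambda = q^4$.

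First I would record the three relevant substitutions. For the factor $\mu = \frac{\lambda^{-1/2} - \lambda^{1/2}}{q - q^{-1}}$, choosing the branch $\sqrt{\lambda} = q^2$ gives
\[
\mu\big|_{\lambda = q^4} \;=\; \frac{q^{-2} - q^2}{q - q^{-1}} \;=\; \frac{-(q - q^{-1})(q + q^{-1})}{q - q^{-1}} \;=\; -(q + q^{-1}),
\]
so that $\mu^{k-1} = (-1)^{k-1}(q + q^{-1})^{k-1}$. Next, $\lambda^{\nu(\pi)}$ becomes $q^{4\nu(\pi)}$. The coefficients $E_k = (E^{-1}-1)(E^{-1}-2)\cdots(E^{-1}-k+1)$, with $E_1 = 1$, do not involve $\lambda$ and are left untouched. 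Finally, the Homflypt factor $P(\pi L)$ in \eqref{clform} is taken in the variables carried by $\Theta$, whose mixed skein relation reads $\tfrac{1}{\sqrt{\lambda}}\,\Theta(L_+) - \sqrt{\lambda}\,\Theta(L_-) = (q - q^{-1})\,\Theta(L_0)$; under $\sqrt{\lambda} = q^2$ this becomes the Jones skein relation $q^{-2}(\cdot) - q^2(\cdot) = (q - q^{-1})(\cdot)$ of rule (1) of Theorem~\ref{mainthm}, so the specialisation sends the Homflypt polynomial to the Jones polynomial on each sublink, i.e.\ $P(\pi L) \mapsto V(\pi L)$.

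Substituting these four facts into \eqref{clform} yields
\[
\theta(q,E)(L) \;=\; \sum_{k=1}^{m} (-1)^{k-1}(q+q^{-1})^{k-1} E_k \sum_{\pi} q^{4\nu(\pi)}\, V(\pi L),
\]
which is the claimed identity; here $m$ is the number of components of $L$, as in Theorem~\ref{licktheta}.

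The only point I would spell out with care is the passage $P(\pi L)\mapsto V(\pi L)$: one must check that the specialisation $\lambda = q^4$ is compatible with Lickorish's derivation, namely that the Homflypt polynomials of the sublinks appearing in \eqref{clform} are evaluated precisely at the values $v=\sqrt{\lambda}$, $z=q-q^{-1}$ that make them equal to the Jones polynomial, and that rule (2) of Theorem~\ref{mainthm} (with $V$ in place of $P$) is consistent with rule (2) of Theorem~\ref{btmainthm} under this specialisation. Everything else is the elementary algebra recorded above, so this normalisation check — rather than any computation — is the heart of the matter.
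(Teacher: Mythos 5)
Your proof is correct and follows exactly the paper's route: the paper's own proof is simply ``substitute $\lambda=q^4$ in \eqref{clform} and a straightforward computation,'' and your computation $\mu|_{\lambda=q^4}=-(q+q^{-1})$ together with $P(\pi L)\mapsto V(\pi L)$ is precisely that computation spelled out. No issues.
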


\begin{proof}
The proof follows by substituting $\lambda = q^4$ in \eqref{clform} and a straightforward computation.
\end{proof}

\subsection{{\it Relating $\theta$ to the Kauffman bracket polynomial}} 

It is clear by the involvement of the linking number in the formula for $\theta$ in Corollary~\ref{combformula} that $\theta$ depends on the orientations of the components of a link $L$ so $\theta$ cannot be directly related to the  Kauffman bracket polynomial. However, we can still relate it to its oriented extension. Starting with the well-known skein relations for the Kauffman bracket polynomial 
\begin{equation}\label{regbracket}
\langle \raisebox{-.1cm}{\begin{tikzpicture}[scale=.2]
\draw [line width=0.35mm]  (-1,-1)-- (-0.22,-0.22);
\draw  [line width=0.35mm ](-1,1)--(0,0);
\draw  [line width=0.35mm] (0.22,0.22) -- (1,1);
\draw [line width=0.35mm]   (0,0) -- +(1,-1);
\end{tikzpicture}}\, \rangle =A \langle \, \raisebox{-.07cm}{\begin{tikzpicture}[scale=.2, mydeco/.style = {decoration = {markings, 
                                                       mark = at position #1 with {\arrow{>}}}
                                       }]
\draw [line width=0.35mm] plot [smooth, tension=2] coordinates { (-1,.8) (0, 0.5) (1,.8)};
\draw [ line width=0.35mm] plot [smooth, tension=2] coordinates { (-1,-.8) (0, -0.5) (1,-.8)};
\end{tikzpicture}}\, \rangle   + A^{-1} \,\langle\, \raisebox{-.1cm}{\begin{tikzpicture}[scale=.2]
 \draw [ line width=0.35mm] plot [smooth, tension=2] coordinates { (-1,-1) (-0.3, 0) (-1,1)};
 \draw [ line width=0.35mm] plot [smooth, tension=2] coordinates { (1,-1) (0.3, 0) (1,1)};
 \end{tikzpicture}}\, \rangle 
\quad \mbox{and} \quad
 \langle \raisebox{-.1cm}{\begin{tikzpicture}[scale=.2]
\draw  [line width=0.35mm] (-1,-1)-- (0,0) ;
\draw [line width=0.35mm] (-1,1)--(-0.22,0.22);
\draw [line width=0.35mm] (0,0) -- (1,1);
\draw [line width=0.35mm]   (0.22,-0.22) -- +(.8,-.8);
\end{tikzpicture}} \, \rangle = 
A^{-1} \langle \, \raisebox{-.1cm}{\begin{tikzpicture}[scale=.2, mydeco/.style = {decoration = {markings, 
                                                       mark = at position #1 with {\arrow{>}}}
                                       }]
\draw [ line width=0.35mm] plot [smooth, tension=2] coordinates { (-1,.8) (0, 0.5) (1,.8)};
\draw [ line width=0.35mm] plot [smooth, tension=2] coordinates { (-1,-.8) (0, -0.5) (1,-.8)};
\end{tikzpicture}}\, \rangle +
 A \,\langle\, \raisebox{-.1cm}{\begin{tikzpicture}[scale=.2]
 \draw [ line width=0.35mm] plot [smooth, tension=2] coordinates { (-1,-1) (-0.3, 0) (-1,1)};
 \draw [ line width=0.35mm] plot [smooth, tension=2] coordinates { (1,-1) (0.3, 0) (1,1)};
 \end{tikzpicture}}\, \rangle     
\end{equation}
and extending to oriented links we obtain the skein rules of the {\it oriented extension of the Kauffman bracket polynomial}, which unavoidably contains conflicting orientations  (see for example \cite[Section~6]{26}):

\begin{equation}\label{clbracket}
\langle \raisebox{-.1cm}{\begin{tikzpicture}[scale=.2]
\draw [line width=0.35mm]  (-1,-1)-- (-0.22,-0.22);
\draw  [line width=0.35mm ](-1,1)--(0,0);
\draw  [line width=0.35mm] (0.22,0.22) -- (1,1)[->];
\draw [line width=0.35mm]   (0,0) -- +(1,-1)[->];
\end{tikzpicture}}\, \rangle =A \langle \, \raisebox{-.07cm}{\begin{tikzpicture}[scale=.2, mydeco/.style = {decoration = {markings, 
                                                       mark = at position #1 with {\arrow{>}}}
                                       }]
\draw [line width=0.35mm, postaction = {mydeco=.6 ,decorate}] plot [smooth, tension=2] coordinates { (-1,.8) (0, 0.5) (1,.8)};
\draw [ line width=0.35mm, postaction = {mydeco=.6 ,decorate}] plot [smooth, tension=2] coordinates { (-1,-.8) (0, -0.5) (1,-.8)};
\end{tikzpicture}}\, \rangle   + A^{-1} \,\langle\, \raisebox{-.1cm}{\begin{tikzpicture}[scale=.2]
\draw  [line width=0.35mm] (-1,-1)-- (0,0)[->] ;
\draw [line width=0.35mm] (-1,1)--(0,0)[->];
\draw [line width=0.35mm] (0.5,0) -- (1.5,1)[->];
\draw [line width=0.35mm]   (0.5,0) -- +(1,-1)[->];\end{tikzpicture}}\, \rangle 
\quad \mbox{and} \quad
 \langle \raisebox{-.1cm}{\begin{tikzpicture}[scale=.2]
\draw  [line width=0.35mm] (-1,-1)-- (0,0) ;
\draw [line width=0.35mm] (-1,1)--(-0.22,0.22);
\draw [line width=0.35mm] (0,0) -- (1,1)[->];
\draw [line width=0.35mm]   (0.22,-0.22) -- +(.8,-.8)[->];
\end{tikzpicture}} \, \rangle = 
A^{-1} \langle \, \raisebox{-.1cm}{\begin{tikzpicture}[scale=.2, mydeco/.style = {decoration = {markings, 
                                                       mark = at position #1 with {\arrow{>}}}
                                       }]
\draw [ line width=0.35mm, postaction = {mydeco=.6 ,decorate}] plot [smooth, tension=2] coordinates { (-1,.8) (0, 0.5) (1,.8)};
\draw [ line width=0.35mm, postaction = {mydeco=.6 ,decorate}] plot [smooth, tension=2] coordinates { (-1,-.8) (0, -0.5) (1,-.8)};
\end{tikzpicture}}\, \rangle +
 A \,\langle\, \raisebox{-.1cm}{\begin{tikzpicture}[scale=.2]
\draw  [line width=0.35mm] (-1,-1)-- (0,0)[->] ;
\draw [line width=0.35mm] (-1,1)--(0,0)[->];
\draw [line width=0.35mm] (0.5,0) -- (1.5,1)[->];
\draw [line width=0.35mm]   (0.5,0) -- +(1,-1)[->];\end{tikzpicture}}\, \rangle     .
\end{equation}
Subtracting by parts we obtain the following skein rule of the standard oriented bracket which is a regular isotopy invariant for oriented links:
\begin{equation}\label{brackregiso}
A \,  \langle \raisebox{-.1cm}{\begin{tikzpicture}[scale=.2]
\draw [line width=0.35mm]  (-1,-1)-- (-0.22,-0.22);
\draw  [line width=0.35mm](-1,1)--(0,0);
\draw  [line width=0.35mm] (0.22,0.22) -- (1,1)[->];
\draw [line width=0.35mm]   (0,0) -- +(1,-1)[->];
\end{tikzpicture}}\, \rangle - A^{-1} \, 
\langle  \raisebox{-.1cm}{\begin{tikzpicture}[scale=.2]
\draw  [line width=0.35mm] (-1,-1)-- (0,0) ;
\draw [line width=0.35mm] (-1,1)--(-0.22,0.22);
\draw [line width=0.35mm] (0,0) -- (1,1)[->];
\draw [line width=0.35mm]   (0.22,-0.22) -- +(.8,-.8)[->];
\end{tikzpicture}} \,  \rangle
= (A^2 - A^{-2} ) \,
\langle \, \raisebox{-.1cm}{\begin{tikzpicture}[scale=.2, mydeco/.style = {decoration = {markings, 
                                                       mark = at position #1 with {\arrow{>}}}
                                       }]
\draw [line width=0.35mm, postaction = {mydeco=.6 ,decorate}] plot [smooth, tension=2] coordinates { (-1,.8) (0, 0.5) (1,.8)};
\draw [line width=0.35mm, postaction = {mydeco=.6 ,decorate}] plot [smooth, tension=2] coordinates { (-1,-.8) (0, -0.5) (1,-.8)};
\end{tikzpicture}}\, \rangle .
\end{equation}

Let now $H(z,a)$ be the regular isotopy analogue of the Homflypt polynomial. The skein relation for $H$ is the following:

\begin{equation}\label{homregiso}
H \left (\raisebox{-.1cm}{\begin{tikzpicture}[scale=.2]
\draw [line width=0.35mm]  (-1,-1)-- (-0.22,-0.22);
\draw  [line width=0.35mm](-1,1)--(0,0);
\draw  [line width=0.35mm] (0.22,0.22) -- (1,1)[->];
\draw [line width=0.35mm]   (0,0) -- +(1,-1)[->];
\end{tikzpicture}}\, \right )-  \, 
H \big (\raisebox{-.1cm}{\begin{tikzpicture}[scale=.2]
\draw  [line width=0.35mm] (-1,-1)-- (0,0) ;
\draw [line width=0.35mm] (-1,1)--(-0.22,0.22);
\draw [line width=0.35mm] (0,0) -- (1,1)[->];
\draw [line width=0.35mm]   (0.22,-0.22) -- +(.8,-.8)[->];
\end{tikzpicture}} \,  \big )
= z \,
H \left ( \, \raisebox{-.1cm}{\begin{tikzpicture}[scale=.2, mydeco/.style = {decoration = {markings, 
                                                       mark = at position #1 with {\arrow{>}}}
                                       }]
\draw [line width=0.35mm, postaction = {mydeco=.6 ,decorate}] plot [smooth, tension=2] coordinates { (-1,.8) (0, 0.5) (1,.8)};
\draw [line width=0.35mm, postaction = {mydeco=.6 ,decorate}] plot [smooth, tension=2] coordinates { (-1,-.8) (0, -0.5) (1,-.8)};
\end{tikzpicture}}\, \right ).
\end{equation}
satisfying the following three initial conditions:
\[
 H(\bigcirc) =1, \quad
H( \raisebox{-.1cm}{
\begin{tikzpicture}[scale=.2]
\draw [line width=0.35mm]  (-.7,-.7)-- (-0.22,-0.22);
\draw  [line width=0.35mm ](-.7,.7)--(0,0);
\draw  [line width=0.35mm] (0.22,0.22) -- (.7,.7)[->];
\draw [line width=0.35mm]   (0,0) -- +(.7,-.7)[->];
 \draw [line width=0.35mm] plot [smooth, tension=2] coordinates { (-.7,.7) (0,1.3) (.7,.7)};
\end{tikzpicture}}
\ ) = a H(  
\raisebox{.06cm}{
\begin{tikzpicture}[scale=.2, mydeco/.style = {decoration = {markings, 
                                                       mark = at position #1 with {\arrow{>}}}
                                       }]
 \draw [line width=0.35mm, postaction = {mydeco=.6 ,decorate}] plot [smooth, tension=2] coordinates {(0,0) (1,.2) (2,0)};
 \end{tikzpicture}}\ ) \quad \mbox{and} \quad
 H(
\raisebox{-.1cm}{\begin{tikzpicture}[scale=.2]
\draw  [line width=0.35mm] (-.7,-.7)-- (0,0) ;
\draw [line width=0.35mm] (-.7,.7)--(-0.22,0.22);
\draw [line width=0.35mm] (0,0) -- (.7,.7)[->];
\draw [line width=0.35mm]   (0.22,-0.22) -- +(.6,-.6)[->];
 \draw [line width=0.35mm] plot [smooth, tension=2] coordinates { (-.7,.7) (0,1.3) (.7,.7)};
\end{tikzpicture}}
) = a^{-1}  H (\raisebox{.06cm}{
\begin{tikzpicture}[scale=.2, mydeco/.style = {decoration = {markings, 
                                                       mark = at position #1 with {\arrow{>}}}
                                       }]
 \draw [line width=0.35mm, postaction = {mydeco=.6 ,decorate}] plot [smooth, tension=2] coordinates {(0,0) (1,.2) (2,0)};
 \end{tikzpicture}}\ ).
\]

In order to put the oriented bracket skein rule \eqref{brackregiso} in the form of the regular Homflypt polynomial skein rule \eqref{homregiso} we define the polynomial:
\begin{equation}\label{curl}
\left \{ L \right \} : = A^{w(L)} \langle L \rangle ,
\end{equation}
where $w(L)$ is the writhe of the link diagram $L$. Note that $\left \{ \ \cdot \  \right \}$ is still a regular isotopy invariant for oriented links and it satisfies the following skein relation:

\begin{equation}\label{curlregiso}
 \left \{ \raisebox{-.1cm}{\begin{tikzpicture}[scale=.2]
\draw [line width=0.35mm]  (-1,-1)-- (-0.22,-0.22);
\draw  [line width=0.35mm](-1,1)--(0,0);
\draw  [line width=0.35mm] (0.22,0.22) -- (1,1)[->];
\draw [line width=0.35mm]   (0,0) -- +(1,-1)[->];
\end{tikzpicture}}\, \right \} - \, 
 \big \{\raisebox{-.1cm}{\begin{tikzpicture}[scale=.2]
\draw  [line width=0.35mm] (-1,-1)-- (0,0) ;
\draw [line width=0.35mm] (-1,1)--(-0.22,0.22);
\draw [line width=0.35mm] (0,0) -- (1,1)[->];
\draw [line width=0.35mm]   (0.22,-0.22) -- +(.8,-.8)[->];
\end{tikzpicture}} \,  \big\}
= (A^2-A^{-2}) \,
\left \{ \, \raisebox{-.1cm}{\begin{tikzpicture}[scale=.2, mydeco/.style = {decoration = {markings, 
                                                       mark = at position #1 with {\arrow{>}}}
                                       }]
\draw [line width=0.35mm, postaction = {mydeco=.6 ,decorate}] plot [smooth, tension=2] coordinates { (-1,.8) (0, 0.5) (1,.8)};
\draw [line width=0.35mm, postaction = {mydeco=.6 ,decorate}] plot [smooth, tension=2] coordinates { (-1,-.8) (0, -0.5) (1,-.8)};
\end{tikzpicture}}\, \right \}.
\end{equation}
which is derived from \eqref{brackregiso} and \eqref{curl} and coincides with \eqref{homregiso} for $z=(A^2- A^{-2})$. Furthermore, by defining $ f(L):=(-A^4)^{-w(L)}\left \{ L \right \}$ and by setting $q=A^{-2}$, one obtains the  ambient isotopy version of \eqref{curlregiso}, which is the skein relation of the Jones polynomial, $V,$ namely:
\begin{equation}\label{vskein}
q^{-2} V ( \raisebox{-.1cm}{\begin{tikzpicture}[scale=.2]
\draw [line width=0.35mm]  (-1,-1)-- (-0.22,-0.22);
\draw  [line width=0.35mm](-1,1)--(0,0);
\draw  [line width=0.35mm] (0.22,0.22) -- (1,1)[->];
\draw [line width=0.35mm]   (0,0) -- +(1,-1)[->];
\end{tikzpicture}}\, ) -  q^{2} \, 
V ( \raisebox{-.1cm}{\begin{tikzpicture}[scale=.2]
\draw  [line width=0.35mm] (-1,-1)-- (0,0) ;
\draw [line width=0.35mm] (-1,1)--(-0.22,0.22);
\draw [line width=0.35mm] (0,0) -- (1,1)[->];
\draw [line width=0.35mm]   (0.22,-0.22) -- +(.8,-.8)[->];
\end{tikzpicture}} \,  )
= (q-q^{-1}) \,
V ( \, \raisebox{-.1cm}{\begin{tikzpicture}[scale=.2, mydeco/.style = {decoration = {markings, 
                                                       mark = at position #1 with {\arrow{>}}}
                                       }]
\draw [line width=0.35mm, postaction = {mydeco=.6 ,decorate}] plot [smooth, tension=2] coordinates { (-1,.8) (0, 0.5) (1,.8)};
\draw [line width=0.35mm, postaction = {mydeco=.6 ,decorate}] plot [smooth, tension=2] coordinates { (-1,-.8) (0, -0.5) (1,-.8)};
\end{tikzpicture}} \, ).
\end{equation}

 Starting now from \eqref{clbracket}, considered only on crossings between different components of a link $L$, and having in mind the methods of \cite{26}, we have that:

\begin{equation}\label{brack}
\langle \raisebox{-.1cm}{\begin{tikzpicture}[scale=.2]
\draw [line width=0.35mm, draw=red]  (-1,-1)-- (-0.22,-0.22);
\draw  [line width=0.35mm, draw=black](-1,1)--(0,0);
\draw  [line width=0.35mm, draw=red] (0.22,0.22) -- (1,1)[->];
\draw [line width=0.35mm, draw=black]   (0,0) -- +(1,-1)[->];
\end{tikzpicture}}\, \rangle =A \langle \, \raisebox{-.07cm}{\begin{tikzpicture}[scale=.2, mydeco/.style = {decoration = {markings, 
                                                       mark = at position #1 with {\arrow{>}}}
                                       }]
\draw [red, line width=0.35mm, postaction = {mydeco=.6 ,decorate}] plot [smooth, tension=2] coordinates { (-1,.8) (0, 0.5) (1,.8)};
\draw [red, line width=0.35mm, postaction = {mydeco=.6 ,decorate}] plot [smooth, tension=2] coordinates { (-1,-.8) (0, -0.5) (1,-.8)};
\end{tikzpicture}}\, \rangle   + A^{-1} \,\langle\, \raisebox{-.1cm}{\begin{tikzpicture}[scale=.2]
\draw  [line width=0.35mm, draw=red] (-1,-1)-- (0,0)[->] ;
\draw [line width=0.35mm, draw=red] (-1,1)--(0,0)[->];
\draw [line width=0.35mm, draw=red] (0.5,0) -- (1.5,1)[->];
\draw [line width=0.35mm, draw=red]   (0.5,0) -- +(1,-1)[->];\end{tikzpicture}}\, \rangle 
\quad \mbox{and} \quad
 \langle \raisebox{-.1cm}{\begin{tikzpicture}[scale=.2]
\draw  [line width=0.35mm, draw=red] (-1,-1)-- (0,0) ;
\draw [line width=0.35mm, draw=black] (-1,1)--(-0.22,0.22);
\draw [line width=0.35mm, draw=red] (0,0) -- (1,1)[->];
\draw [line width=0.35mm, draw=black]   (0.22,-0.22) -- +(.8,-.8)[->];
\end{tikzpicture}} \, \rangle = 
A^{-1} \langle \, \raisebox{-.1cm}{\begin{tikzpicture}[scale=.2, mydeco/.style = {decoration = {markings, 
                                                       mark = at position #1 with {\arrow{>}}}
                                       }]
\draw [red, line width=0.35mm, postaction = {mydeco=.6 ,decorate}] plot [smooth, tension=2] coordinates { (-1,.8) (0, 0.5) (1,.8)};
\draw [red, line width=0.35mm, postaction = {mydeco=.6 ,decorate}] plot [smooth, tension=2] coordinates { (-1,-.8) (0, -0.5) (1,-.8)};
\end{tikzpicture}}\, \rangle +
 A \,\langle\, \raisebox{-.1cm}{\begin{tikzpicture}[scale=.2]
\draw  [line width=0.35mm, draw=red] (-1,-1)-- (0,0)[->] ;
\draw [line width=0.35mm, draw=red] (-1,1)--(0,0)[->];
\draw [line width=0.35mm, draw=red] (0.5,0) -- (1.5,1)[->];
\draw [line width=0.35mm, draw=red]   (0.5,0) -- +(1,-1)[->];\end{tikzpicture}}\, \rangle    ,  
\end{equation}
where different colours indicate different components of $L$. Further \eqref{brack} gives rise to  \eqref{curlregiso} and \eqref{vskein} for crossings between different components.

\smallbreak
 We then define the ambient isotopy link invariant $\left \{ \left \{ L \right \}\right \}$ of the  link diagram $L$ by the following two rules: \\
\smallbreak
\noindent (1) For a  union $\mathcal{K}^r = \sqcup_{i=1}^r K_i$, of $r$ unlinked knots with $r \geq 1$, we have that:
\begin{equation}\label{init}
\left \{ \left \{ \mathcal{K}^r\right \}\right \}= E^{1-r} V (\mathcal{K}),
\end{equation}
(2) On crossings involving different components the skein relation of the Jones polynomial holds, namely:
\begin{equation}\label{curlbrack}
q^{-2} \, \left \{  \left \{ \raisebox{-.1cm}{\begin{tikzpicture}[scale=.2]
\draw [line width=0.35mm, draw=red]  (-1,-1)-- (-0.22,-0.22);
\draw  [line width=0.35mm, draw=black](-1,1)--(0,0);
\draw  [line width=0.35mm, draw=red] (0.22,0.22) -- (1,1)[->];
\draw [line width=0.35mm, draw=black]   (0,0) -- +(1,-1)[->];
\end{tikzpicture}}\, \right \} \right \} - q^2 \, 
 \big \{  \big \{ \raisebox{-.1cm}{\begin{tikzpicture}[scale=.2]
\draw  [line width=0.35mm, draw=red] (-1,-1)-- (0,0) ;
\draw [line width=0.35mm, draw=black] (-1,1)--(-0.22,0.22);
\draw [line width=0.35mm, draw=red] (0,0) -- (1,1)[->];
\draw [line width=0.35mm, draw=black]   (0.22,-0.22) -- +(.8,-.8)[->];
\end{tikzpicture}} \,  \big\} \big\}
= (q- q^{-1} ) \,
\left \{\left \{ \, \raisebox{-.1cm}{\begin{tikzpicture}[scale=.2, mydeco/.style = {decoration = {markings, 
                                                       mark = at position #1 with {\arrow{>}}}
                                       }]
\draw [red, line width=0.35mm, postaction = {mydeco=.6 ,decorate}] plot [smooth, tension=2] coordinates { (-1,.8) (0, 0.5) (1,.8)};
\draw [red, line width=0.35mm, postaction = {mydeco=.6 ,decorate}] plot [smooth, tension=2] coordinates { (-1,-.8) (0, -0.5) (1,-.8)};
\end{tikzpicture}}\, \right \}\right \}.
\end{equation}
Comparing \eqref{init} and \eqref{curlbrack} to Remarks~\ref{ambremv} and   we deduce that $\left \{\left \{L \right \}\right \}$ coincides with the invariant $V [V]$ in the notation of \cite{27} and, thus, we recover the invariant $\theta(q,E)$.
 Hence we have proved the following:

\begin{theorem} The invariant $\theta(q,E)$ can be expressed in terms of the oriented extension of the Kauffman bracket polynomial.
\end{theorem}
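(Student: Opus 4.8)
The plan is to show that the ambient isotopy invariant $\left\{\left\{\,\cdot\,\right\}\right\}$ built from the oriented bracket via rules \eqref{init} and \eqref{curlbrack} is well defined and equals $\theta(q,E)$, the two facts together giving the claimed expression of $\theta$ in terms of the oriented extension of the Kauffman bracket polynomial. The key observation is that $\left\{\left\{\,\cdot\,\right\}\right\}$ is defined by exactly the same two rules as $\theta(q,E)$ in Theorem~\ref{mainthm} --- the Jones skein relation on mixed crossings together with the product initial condition on disjoint unions of knots --- except that the Jones polynomial appearing in the initial condition \eqref{init} is the one reconstructed from the standard oriented bracket $\{\,\cdot\,\}$ via the writhe normalisation $f(L)=(-A^4)^{-w(L)}\{L\}$ and the substitution $q=A^{-2}$.

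First I would establish that this reconstruction genuinely recovers the Jones polynomial: the oriented bracket of \eqref{clbracket} restricted to self-crossings of a knot obeys, after subtracting by parts, the regular isotopy skein rule \eqref{brackregiso}; passing to $\{L\}=A^{w(L)}\langle L\rangle$ gives \eqref{curlregiso}, which is the $H$-skein relation \eqref{homregiso} with $z=A^2-A^{-2}$; and applying the further normalisation $(-A^4)^{-w(L)}$ together with $q=A^{-2}$ produces precisely the Jones skein relation \eqref{vskein} with the correct value on the unknot, hence $f(L)=V(L)$ on knots (and on links, for that matter). So the initial condition \eqref{init} is literally $\left\{\left\{\mathcal{K}^r\right\}\right\}=E^{1-r}\prod_{i=1}^r V(K_i)$, matching rule (2) of Theorem~\ref{mainthm} verbatim, and the mixed skein relation \eqref{curlbrack} --- which is derived from \eqref{brack} by the same writhe normalisation argument, now applied only at crossings between distinct components --- matches rule (1) of Theorem~\ref{mainthm}.

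Next, by the uniqueness half of Theorem~\ref{mainthm}, any ambient isotopy invariant of classical oriented links satisfying these two rules must coincide with $\theta(q,E)$; and $\left\{\left\{\,\cdot\,\right\}\right\}$ is such an invariant. Rather than reprove well-definedness from scratch, I would instead appeal to Remark~\ref{ambremv} and Remark~\ref{diagrem}: the invariant $V[V]$ of \cite{kaula} is built by precisely this ``bracket-of-bracket'' state-sum procedure (the initial condition on disjoint unions and the mixed Jones skein relation), and with the parameter identification $\lambda=q^4$, $a=q^{-2}$, $\zeta=q-q^{-1}$ one has $V[V]=\theta(q,E)$. Comparing \eqref{init} and \eqref{curlbrack} with the defining rules of $V[V]$ in \cite{kaula} shows $\left\{\left\{\,\cdot\,\right\}\right\}=V[V]$, and hence $\left\{\left\{\,\cdot\,\right\}\right\}=\theta(q,E)$, which is the statement.

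The main obstacle I anticipate is bookkeeping rather than conceptual: one must check that the normalisation constants align exactly --- in particular that the substitution $q=A^{-2}$ together with the writhe factors $(-A^4)^{-w}$ turns \eqref{brackregiso} into \eqref{vskein} with the signs and powers as written, and that restricting \eqref{clbracket} to mixed crossings and applying the same normalisation yields \eqref{curlbrack} and not some rescaled variant. A secondary subtlety is that the oriented extension of the bracket in \eqref{clbracket} involves diagrams with conflicting orientations, so one must be careful that the reduction to the Jones skein relation is only invoked where it is legitimate (self-crossings for the Jones-polynomial part, mixed crossings for the outer skein relation), exactly as in the framework of \cite{kaula}; once this is in place the identification with $V[V]$, and therefore with $\theta(q,E)$, is immediate.
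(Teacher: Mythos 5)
Your proposal is correct and follows essentially the same route as the paper: the paper likewise derives the Jones skein relation from the oriented bracket via the writhe normalisation, defines $\left\{\left\{\,\cdot\,\right\}\right\}$ by the two rules \eqref{init} and \eqref{curlbrack}, and identifies it with $V\left[V\right]$ via Remark~\ref{ambremv}, hence with $\theta(q,E)$. Your additional appeal to the uniqueness clause of Theorem~\ref{mainthm} is a harmless reinforcement of the same identification.
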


\subsection{{\it Detecting the Thistlethwaite link (Louis H. Kauffman and Dimos Goundaroulis)}}

In this section we will demonstrate how the invariant $\theta(q,E)$ is able to distinguish the Thistlethwaite link \cite{33, 9} from the unlink with two components, something that the Jones polynomial is not able to see. First we shall consider the ambient isotopy version of the skein relation of the Jones polynomial applied on the Thistlethwaite link  and then we will compute the invariant $\theta(q,E)$ for this pair of links.  Kauffman and the first author made use of Bar-Natan's Knot Theory package for Mathematica \cite{7} in which the Jones polynomial is a function of $t$ and it satisfies the skein relation: 
\begin{equation}\label{mathskein}
t^{-1} V_{K_+}(t) - t V_{K_-}(t) = (t^{1/2} - t^{-1/2}) V_{K_0},
\end{equation} where $K_+,K_-,K_0$ is the oriented Conway triple.
In order to simplify computations, one may consider the following:
\begin{equation}\label{mathsubs}
  a:= t^2, z:=(t^{1/2} - t^{-1/2}),  b:= t z, c:= t^{-1} z.
  \end{equation}
   From \eqref{mathskein}, \eqref{mathsubs} we obtain the following relations:
 $$V_{K_+} = a V_{K_-} + b V_{K_0}$$ and
 $$V_{K_-} = a^{-1} V_{K_+} - c V_{K_0}.$$
It follows that the knots and links in the figures indicated in Figures~\ref{tlink} - \ref{k4} satisfy the following relation:
 $$V_{TLink} = bV_{K_1} + abV_{K_2} +-ca^2 V_{K_3} - ac V_{K_4} + V_{Unlinked}.$$

 \begin{figure}
  \centering
  \begin{minipage}[b]{0.45\textwidth}
    \includegraphics[width=\textwidth]{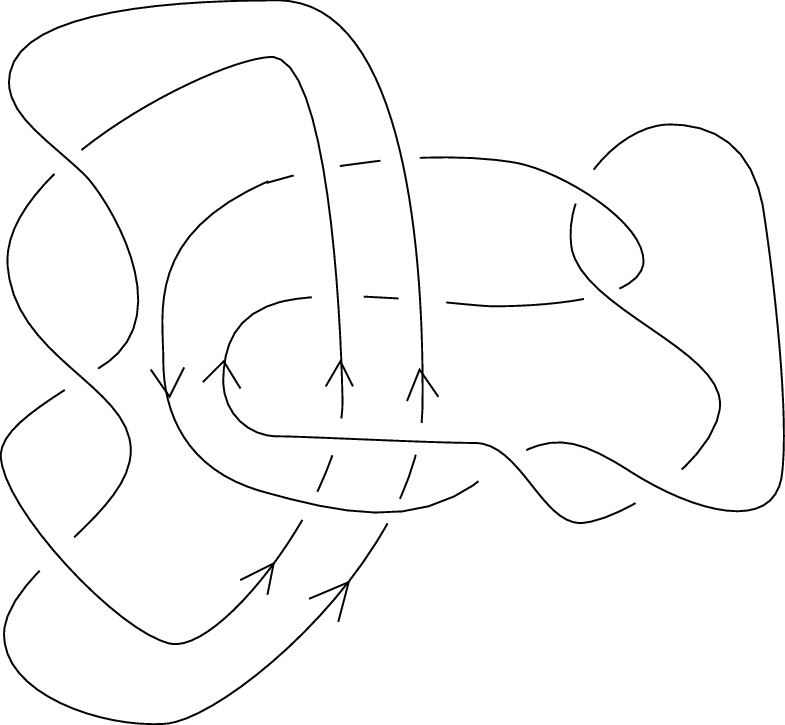}
    \caption{The Thistlethwaite link.}\label{tlink}
  \end{minipage}
  \hfill
  \begin{minipage}[b]{0.45\textwidth}
    \includegraphics[width=\textwidth]{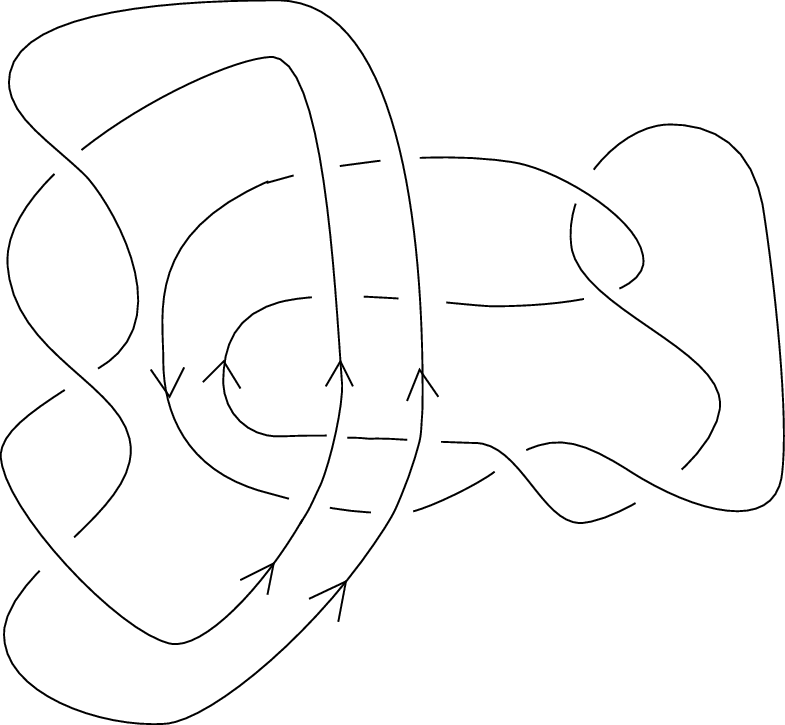}
    \caption{Unlinked link.}
  \end{minipage}

  \centering
  \begin{minipage}[b]{0.45\textwidth}
    \includegraphics[width=\textwidth]{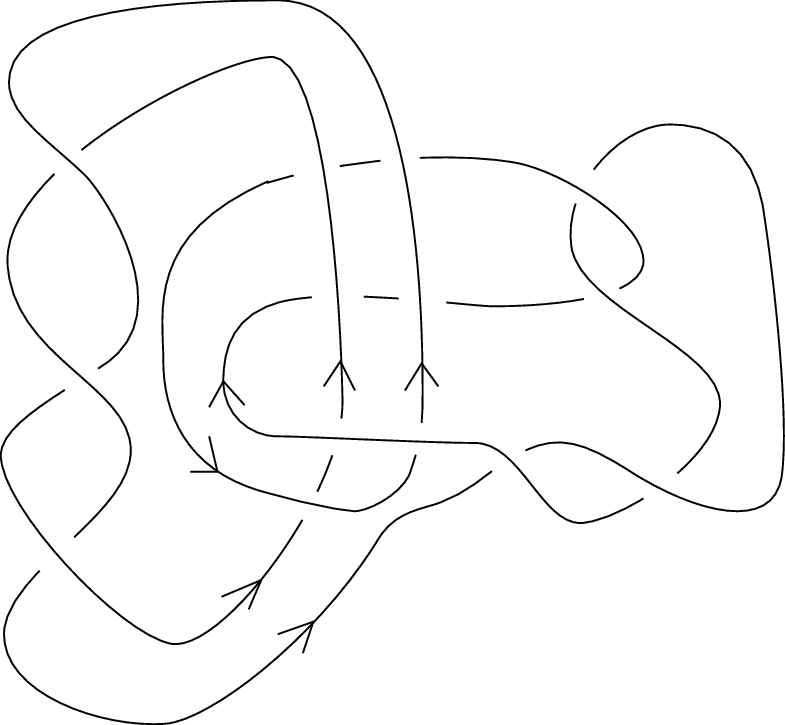}
    \caption{The knot K1.}
  \end{minipage}
  \hfill
  \begin{minipage}[b]{0.45\textwidth}
    \includegraphics[width=\textwidth]{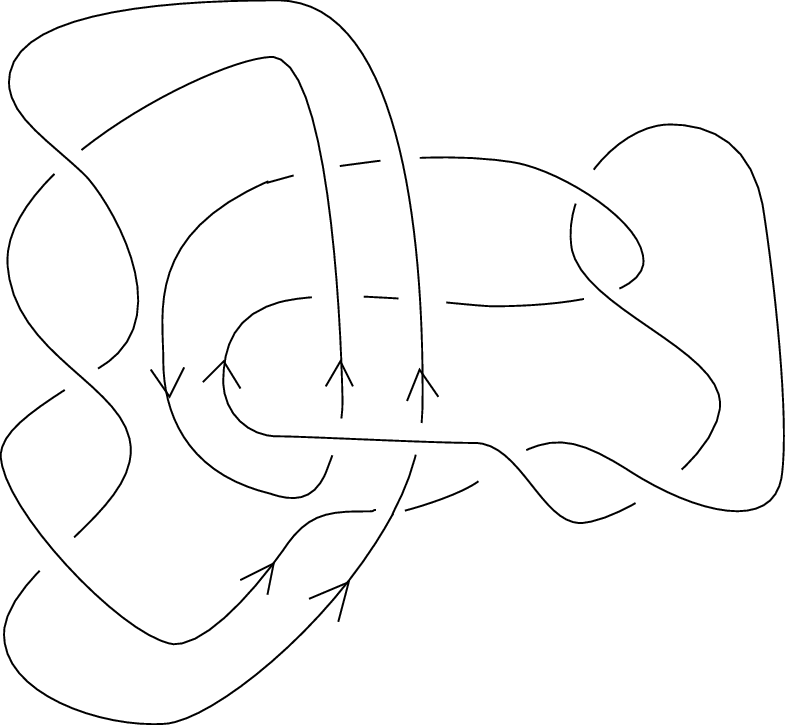}
    \caption{The knot K2.}
  \end{minipage}

  \centering
  \begin{minipage}[b]{0.45\textwidth}
    \includegraphics[width=\textwidth]{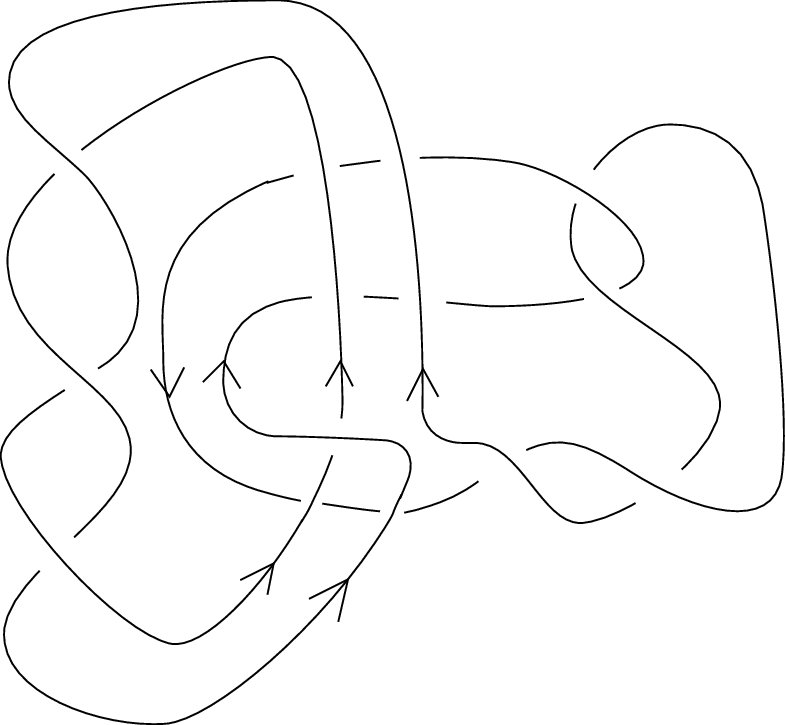}
    \caption{The knot K3.}
  \end{minipage}
  \hfill
  \begin{minipage}[b]{0.45\textwidth}
    \includegraphics[width=\textwidth]{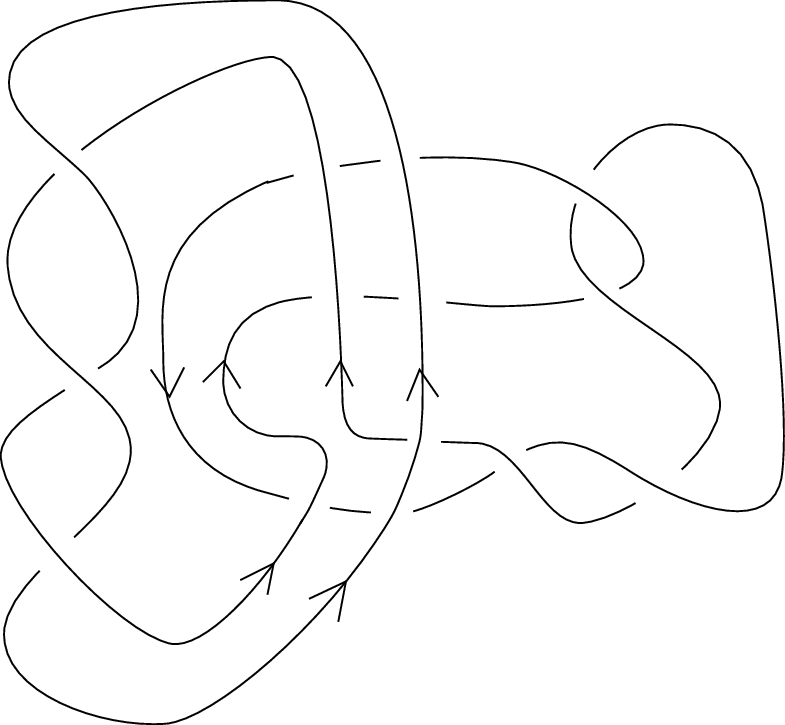}
    \caption{The knot K4.}\label{k4}
  \end{minipage}
\end{figure}
 
 This can be easily verified by the specific values computed in Mathematica:
 $$V_{TLink} = -t^{-1/2} - t^{1/2}$$
 $$V_{K_1} = -1+\frac{1}{t^7}-\frac{2}{t^6}+\frac{3}{t^5}-\frac{4}{t^4}+\frac{4}{t^3}-\frac{4}{t^2}+\frac{3}{t}+t$$
 $$V_{K_2} =  1-\frac{1}{t^9}+\frac{3}{t^8}-\frac{4}{t^7}+\frac{5}{t^6}-\frac{6}{t^5}+\frac{5}{t^4}-\frac{4}{t^3}+\frac{3}{t^2}-\frac{1}{t}$$
$$V_{K_3} = 1-\frac{1}{t^9}+\frac{2}{t^8}-\frac{3}{t^7}+\frac{4}{t^6}-\frac{4}{t^5}+\frac{4}{t^4}-\frac{3}{t^3}+\frac{2}{t^2}-\frac{1}{t}$$
$$V_{K_4} = -1-\frac{1}{t^6}+\frac{2}{t^5}-\frac{2}{t^4}+\frac{3}{t^3}-\frac{3}{t^2}+\frac{2}{t}+t$$
$$V_{Unlinked}= \frac{1}{t^{13/2}}-\frac{1}{t^{11/2}}-\frac{1}{t^{7/2}}+\frac{1}{t^{3/2}}-\frac{1}{\sqrt{t}}-t^{3/2}$$

We have proven computationally that the Thistlethwaite link cannot by distinguished from the unlink with two components using the Jones polynomial.
If one wants to compute the invariant $\theta(q,E)$, where $q=\sqrt{t}$, on the same pair of links, then the computation has to be modified to: 
$$
\theta(q,E) = bV_{K_1}(q) + abV_{K_2}(q) +-ca^2 V_{K_3}(q) - ac V_{K_4}(q) + E^{-1}V_{Unlinked}(q).$$ 
It is quite clear that this is non-trivial when the  variable $E$ is not equal to 1.

On the other hand,  the Thistlethwaite link consists of an intertwined pair of a trefoil and a figure-8 knot. Evaluating the Lickorish formula (see Corollary~\ref{combformula}) on this 2-components link yields:
$$
\theta(q,E) =(1-E^{-1})(q+q^{-1})V_{K_{3_1}}(q)V_{K_{4_1}}(q) + V_{TLink}(q).
$$
Thus the situation is completely clarified by the Lickorish formula.

%\section{Summary} In this paper we proved the existence of a 2-variable skein invariant for classical links which generalizes the Jones polynomial and the invariants $\theta_d$ \cite{14} and is different from the Homflypt polynomial and the Kauffman polynomial. We conclude the paper with a diagram (Fig.~\ref{diag}) that gives a complete overview of the whole construction.
%\begin{figure}[H] 
%\[
% \xymatrix{ & & {\rm H}_n (q)\ar@{.>}[d]  & & \\
%&  & P(q,\lambda_H) \ar@{_{(}->}[d]& &\\
%&\mathcal{E}_n(q)\ar@/^2.0pc/[uur] \ar@{->>}[d] \ar@{.>}[r] & \Theta(q,\lambda, E)\ar[d]^{\lambda= q^4}  & \Theta_d(q,\lambda_d) \ar@{_{(}->}[l]\ar[d]^{\lambda_d=q^4}  \ar@/_/[ul]^{d=1} & {\rm Y}_{d,n}(q) \ar@/_2.0pc/[uull]\ar@{->>}[d] \ar@{.>}[l]  \\
%&{\rm PTL}_n(q) \ar@/_2.0pc/[ddr] \ar@{.>}[r] & \theta(q, E)   & \theta_d(q) \ar@/^/[dl]^{d=1}  \ar@{_{(}->}[l] & {\rm FTL}_{d,n}(q) \ar@{.>}[l] \ar@/^2.0pc/[ddll]\\
%&  & V(q)  \ar@{_{(}->}[u] & &\\
%& & {\rm TL}_n (q)\ar@{.>}[u] & &}
%\]
%\caption{An overview of the relations between the algebras and their derived invariants.}\label{diag}
%
%\end{figure}
%
%\smallbreak
%In the above diagram the outer arrows indicate the algebras involved and the algebra projection maps. The dotted arrows point from the algebras to their associated invariants. The inner part of the diagram shows the inclusions and the specializations of the invariants.

\section*{Acknowledgments}
The authors would like to thank Louis H. Kauffman for fruitful conversations throughout the development of this paper. 
This research has been co-financed by the European Union (European Social
 Fund - ESF) and Greek national funds through the Operational Program
 "Education and Lifelong Learning" of the National Strategic Reference
 Framework (NSRF) - Research Funding Program: THALES: Reinforcement of the
 interdisciplinary and/or inter-institutional research and innovation, MIS: 380154.


\begin{thebibliography}{0}

\bibitem{1}  F. Aicardi and J. Juyumaya, An algebra involving braids and ties,
{\it Preprint ICTP IC/2000/179, Trieste} (2000).

\bibitem{2} F. Aicardi and J. Juyumaya, Markov trace on the
  algebra of braids and ties, {\it Moscow Math. J.}, {\bf 16} (2016) 397--431.

\bibitem{3} F. Aicardi and J. Juyumaya,  Tied links {\it J. Knot Theory Ramif.} {\bf 25} (2016).

\bibitem{4} M. Chlouveraki, J. Juyumaya, K. Karvounis, and S. Lambropoulou, Identifying the invariants for classical knots and links from the
  {Yokonuma--Hecke} algebras, to appear in {\it Int. Math. Res. Notices}.

\bibitem{5} M. Chlouveraki and S. Lambropoulou, The {Yokonuma--Hecke} algebras
  and the {Homflypt} polynomial, {\it J. Knot Theory Ramif.}, {\bf 22} (2013).

\bibitem{6} S. Chmutov, S. Jablan, K. Karvounis, and S. Lambropoulou, On the
  knot invariants from the {Yokonuma--Hecke} algebras, {\it J. Knot Theory Ramif.} {\bf 25} no 9 (2015), 1641004.
  
  \bibitem{7} D. Bar-Natan, S. Morrison and et al., The {K}not {A}tlas, http://katlas.org. Accessed March 5 2018.

\bibitem{8} L. P. d'Andecy and E. Wagner,  The HOMFLYPT polynomials of sublinks
  and the Yokonuma--Hecke algebras, to appear in {\it Proc. R. Soc. Edinb. A}. 


\bibitem{9} S. Eliahou, L. H. Kauffman and M. B. Thistlethwaite, Infinite families of links with trivial Jones polynomial. {\it Topology} {\bf 42} (2003)155--169.

\bibitem{10} J. Espinoza and S. Ryom-Hansen, Cell structures for the {Yokonuma--Hecke} algebra and the algebra of braids and ties. In press {\it J. Pure Appl. Algebra} (2018). DOI: \url{https://doi.org/10.1016/j.jpaa.2018.02.001}.

\bibitem{11} P. Freyd, D. Yetter, J. Hoste, W. Lickorish, K. Millett, and A. Ocneanu, A new polynomial invariant of knots and links, {\it Bull. AMS}, {\bf 12} (1985),
239--246.

\bibitem{12} D. Goundaroulis, Framization of the Temperley-Lieb algebra and related link invariants, Ph. D. Thesis,  National Technical University of Athens (2014).

\bibitem{13}  D. Goundaroulis, J. Juyumaya, A. Kontogeorgis, and S. Lambropoulou, {The Yokonuma-Temperley--Lieb Algebra}, {\it Banach Center Pub.}, {\bf 103} (2014), 73--95.

\bibitem{14} D. Goundaroulis, J. Juyumaya, A. Kontogeorgis, and S. Lambropoulou, {Framization of the
  Temperley--Lieb Algebra}, {\it Math. Res. Lett.}  {\bf 24} (2017), 299--345. DOI: \url{http://dx.doi.org/10.4310/MRL.2017.v24.n2.a3}.

\bibitem{15}  D. Goundaroulis and S. Lambropoulou, Classical link invariants from the framizations of the Iwahori-Hecke algebra and the Temperley--Lieb algebra
  of type $A$, {\it J. Knot Theory Ramif.} {\bf 26} no 9 (2017), 1743005. DOI: \url{http://dx.doi.org/10.1142/S0218216517430052}.

\bibitem{16} V. F. R. Jones, A polynomial invariant for knots via von Neumann algebras, {\it Bull. Amer. Math. Soc.} {\bf 12} No. 1, (1985), 103--111.

\bibitem{17}  V.F.R. Jones, Hecke algebra representations of braid groups and link polynomials, {\it Ann. Math}, {\bf 126} (1987), 335--388.

\bibitem{18} J. Juyumaya, Markov trace on the {Yokonuma--Hecke} algebra, {\it J. Knot. Theory Ramif. {\bf13} (2004) 25--39.

\bibitem{19} J. Juyumaya, A partition {Temperley--Lieb} algebra} {\it Preprint}, arXiv:1304.5158 (2013).

\bibitem{20} J. Juyumaya and S. Lambropoulou, $p$-adic framed braids, {\it Topology  and its Applications}, {\bf 154} (2007), 1804--1826.

\bibitem{21} J. Juyumaya and S. Lambropoulou, An adelic extension of the jones polynomial, in {\it The mathematics of knots}, {\it Contributions in the Mathematical and Computational Sciences},eds. M. Banagl and  D. Vogel ,   Vol. 1, (Springer, 2009), 825--840.

\bibitem{22} J. Juyumaya and S. Lambropoulou, $p$-adic framed braids {II}, {\it Adv. Math.}, {\bf 234} (2013), 149--191.

\bibitem{23} J. Juyumaya and S. Lambropoulou, On the framization
  of knot algebras, in {\it New Ideas in Low-dimensional Topology}, {\it Series on Knots and everything}, eds. L. Kauffman and V.Manturov,  (World Scientific, 2014).

\bibitem{24}  K. Karvounis, Enabling computations for link invariants coming from the {Yokonuma--Hecke} algebras, {\it J. Knot Theory Ramif.}, {\bf 25} (2015).

\bibitem{25} L. H. Kauffman, An invariant of regular isotopy, {\it Trans. Am. Math.  Soc.}, {\bf 318} (1990), 417--471.

\bibitem{26} L. H. Kauffman, Knots and Physics, {\it Series on Knots and Everything} Vol. 53, (World Scientific, 4 ed., 2013).

\bibitem{27} L. H. Kauffman and S. Lambropoulou, {\it New skein invariants of links}. Submitted for publication.

\bibitem{28} L. H. Kauffman and S. Lambropoulou,  Skein invariants of links and their state sum models, {\it Symmetry} {\bf 9} (2017), 226; doi:10.3390/sym9100226.

\bibitem{29} W .B. R. Lickorish and K .C. Millett, A polynomial invariant of oriented links, {\it Topology} {bf 26}, No. 1 (1987), 107--141. 

\bibitem{30} A. Ocneanu,  A polynomial invariant for knots - A combinatorial and algebraic approach,  {\it Preprint MSRI}, Berkeley (1984). 

\bibitem{31} J. H. Przytycki and P. Traczyk, {Invariants of links of Conway  type}, {\it Kobe J. Math.}, {\bf 4} (1987), 115--139.

\bibitem{32} S. Ryom-Hansen, {On the Representation Theory of an algebra of  braids and ties}, {\it J. Algebra Comb.}, {\bf 33} (2008).
 
\bibitem{33}M. Thistlethwaite, {Links with trivial Jones polynomial.} {\it J. Knot Theory Ramif.} {\bf 10} no. 4 (2001),641--643.

\bibitem{34}T. Yokonuma, Sur la structure des anneux de Hecke d'un group de Chevalley fini,{\it C.R. Acad. Sc. Paris}, {\bf 264} (1967), 344--347.

\end{thebibliography}
\end{document}